\def\a{\alpha}
\def\0{\bar{0}}
\def\1{\bar{1}}
\def\t{\tilde}
\def\t{\mathfrak t}
\newtheorem{lemma}{Lemma}[section]
\newtheorem{theorem}[lemma]{Theorem}
\newtheorem{corollary}[lemma]{Corollary}
\title{\bf On solvable elements in the Weyl algebra}
\author{Chaowen Zhang
\\ Department of
Mathematics,\\ China University
 of Mining and Technology,\\ Xuzhou, 221116, Jiang Su, P. R. China}
\date{ }
\begin{document}

\maketitle

{\it  Mathematics Subject Classification (2010)}: 16S32; 16W50.

\section{Introduction}

Let $\mathbb F$ be a field of characteristic zero. The  Weyl algebra $A$ over  $\mathbb F$ is  an associative algebra  with generators $p, q$ satisfying the relation
(\cite{dix,yt})$$[p, q]=pq-qp=1.$$ This algebra was first introduced by H. Weyl.  The $n$-th Weyl algebra $A_n$ over $\mathbb F$ is defined by $2n$ generators $p_1, q_1,\cdots, p_n,q_n, $ subject to the relations $$[p_i, p_j]=[q_i,q_j]=0,\quad [p_i, q_j]=\delta_{ij},$$ where $\delta_{ij}$ is the Kronecker symbol (see \cite{bav2,ggc,bk,yt}).\par
In \cite{dix}, Dixmier proposed the following question: Is an algebra endomorphism of $A$ necessarily an automorphism?
This question is generalized to all the Weyl algebras, known as the Dixmier Conjecture. It was shown that the Dixmier conjecture is stably equivalent to the Jacobian Conjecture (See \cite{ae, bass, bav2,bk, yt}). But currently, the Dixmier Conjecture remains open even for the case $n=1$.\par
  For convenience, we say that an element $x\in A$ is {\sl solvable} if there is an element $y\in A$ such that $$(\mathrm{ad}\ x) y=:[x, y]=1,$$   and $x$ is  {\sl unsolvable} if it is not solvable. Since \ $[y,-x]=[x, y]$ \ for $x,y\in A$, $y$ is also solvable if $x$ is so. Let $\text{Aut}(A)$ denote the group of all the algebra automorphisms of $A$. Then it is clear that if $x$ is solvable, so is $\Phi (x)$  for any $\Phi \in\text{Aut}(A)$.\
  Properties of solvable elements are studies in \cite{ggc, ggc1} and especially, a necessary condition for an element being solvable is given in \cite{ggc1}.\par
  In this paper we show that to determine all solvable elements in $A$ is   closely related to the Dixmier's  above question. We
  give some sufficient conditions for an element in $A$ being unsolvable, and we also study the properties of solvable elements.\par
The paper is arranged as follows. In Section 2 we give preliminaries. In Section 3 we study the properties of solvable elements. We show that the Dixmier's open question is equivalent to the statement that each element in $\Delta_2$ is  unsolvable. In Section 4 we prove two reduction theorems which give sufficient conditions for an element being unsolvable.  Finally, in Section 5 we give a geometric description of solvable elements.
\section{Preliminaries}
According to \cite{dix}, $A$ has a basis\ $p^iq^j,\ i,j\geq 0.$\ For each $s\in \mathbb Z$, define a subspace $A_s$ of $A$ by $$A_s=\langle p^iq^j| j-i=s\rangle.$$ It is easy to check that $A_iA_j\subseteq A_{i+j}$ for all $i, j$. Therefore $A$ becomes a $\mathbb Z$-graded algebra $A=\oplus_{i\in \mathbb Z}A_i$.\par
Denote  the element $pq\in A$ by $h$.  Then we have $$[h, p]=-p,\quad [h,q]=q.$$
A short computation shows that (See \cite[Lemma 2.1]{ggc1} or \cite[Lemma 5.1]{aj}) $$f(h)p^n=p^nf(h-n)\quad \text{and}\quad f(h)q^n=q^nf(h+n)$$ for  $f(X)\in \mathbb F[X]$ and $n\in\mathbb N$. Then each element of $A_i$ for $i>0$ (resp. $i<0$; $i=0$) can be uniquely expressed in the form $f(h)q^i$ (resp. $f(h)p^i$; $f(h)$), and hence
   each element in $A$ can be written uniquely as $$x=f_{-s}(h)p^s+\cdots +f_0(h)+\cdots +f_t(h)q^t,\quad f_i(X)\in \mathbb F[X],\ s,t\in \mathbb N.$$
Define an automorphism $\omega$ of $A$ by letting
$$\omega (p)=-q\quad\text{and}\quad \omega (q)=p.$$ Then we have \ $\omega (h)=-qp=1-h$, and hence \ $\omega (A_i)=A_{-i}$ \ for all $i$.\par
  In the following we draw the standard terminologies and notation from \cite{dix}.\par
  For $x\in A$, let $$\begin{aligned} V(x)&=\{y\in A| \text{dim}\ \langle(\mathrm{ad}\ x)^my|m=0,1,2,\cdots\rangle<\infty\},\\
   N(x)&=\{y\in A| (\mathrm{ad}\ x)^n y=0 \quad \text{for\ some}\quad n>0\},\\
                     C(x)&=\{y\in A| (\mathrm{ad}\ x)y=0\},\\
                     D(x)&=\{y\in A| (\mathrm{ad}\ x)y=\lambda y\quad \text{for\ some}\quad \lambda\in \bar{\mathbb F}\},\\
                     \mathbb F[x]&=\{f(x)\in A| f(X)\in \mathbb F[X]\},
                     \end{aligned}$$
                     where $\bar{\mathbb F}$ is an algebraic closure of $\mathbb F$.
  Then by \cite[Corollary 6.7]{dix}, the set $A-\mathbb F$ has the following partition: $$A-\mathbb F=\cup^5_{i=1}\Delta_i,$$ where
  $$\begin{aligned}
  \Delta_1&=\{x\in A-\mathbb F| N(x)=A, \ D(x)=C(x)\},\\
  \Delta_2&=\{x\in A-\mathbb F, N(x)\neq A, \ N(x)\neq C(x), \ D(x)=C(x)\},\\
  \Delta_3&=\{x\in A-\mathbb F|D(x)=A, \ N(x)=C(x)\},\\
  \Delta_4&=\{x\in A-\mathbb F|D(x)\neq A, \ D(x)\neq C(x), \ N(x)=C(x)\},\\
  \Delta_5&=\{x\in A-\mathbb F| D(x)=N(x)=C(x)\}.
  \end{aligned}$$
  It is clear that each $\Delta_i$ is invariant under every $\Phi\in \text{Aut}(A)$.
  An element $x$ is called nilpotent (resp. strictly nilpotent) if $x\in \Delta_1\cup\Delta_2$ (resp. $\Delta_1$). By definition it is easy to see that $x$ is nilpotent if and only if $N(x)\neq C(x)$.\par
  Let $f=\sum \a_{ij}X^iY^j\in \mathbb F[X,Y]$, and let $E(f)$ denote the set of pairs $(i,j)$ such that $\a_{ij}\neq 0$. For two real numbers $\rho,\sigma$ we denote $$v_{\rho,\sigma}(f)=\text{sup}_{(i,j)\in E(f)} (i\rho+j\sigma),$$  where we assign $-\infty$ to be $v_{\rho,\sigma}(0)$. We denote by $E_{\rho,\sigma}(f)$ the set of pairs $(i,j)$ in $E(f)$ such that $i\rho+j\sigma=v_{\rho,\sigma}(f)$. Then $E_{\rho,\sigma}(f)$ is nonempty if $f\neq 0$. We say that $f$ is $(\rho,\sigma)$-homogeneous of $(\rho,\sigma)$-degree $v_{\rho,\sigma}(f)$ if $E(f)=E_{\rho,\sigma}(f)$.\par
  Let $x=\sum \a_{ij}p^iq^j\in A$. Then we define $E(x), \ v_{\rho,\sigma}(x), \ E_{\rho,\sigma}(x)$ to be those for the polynomial $f=\sum \a_{ij}X^iY^j$. In particular, the polynomial $$\sum_{(i,j)\in E_{\rho,\sigma}(x)}\a_{ij} X^iY^j$$ is called the {\sl $(\rho,\sigma)$-polynomial} of $x$, and the element $$\sum_{(i,j)\in E_{\rho,\sigma}(x)}\a_{ij}p^iq^j$$ is called the {\sl $(\rho,\sigma)$-term} of $x$. \par

  \begin{lemma}(\cite[Lemma 2.4]{dix}) Let $x,y\in A$, and let $\rho, \sigma$ be real numbers such that $\rho+\sigma>0$.
Then  \par
(1) the $(\rho,\sigma)$-polynomial of $xy$ is the product of $(\rho,\sigma)$-polynomials of $x$ and $y$.\par
(2) $v_{\rho,\sigma}(xy)=v_{\rho,\sigma}(x)+v_{\rho,\sigma}(y).$
  \end{lemma}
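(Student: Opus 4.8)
The plan is to reduce everything to the behaviour of the ordered monomials $p^iq^j$ under multiplication and then to isolate the top-weight part. The single crucial observation is that the relation $[p,q]=1$ only ever produces correction terms of strictly lower $(\rho,\sigma)$-weight, and this is precisely where the hypothesis $\rho+\sigma>0$ enters. I would therefore prove (1) and (2) together, deducing (2) from the non-vanishing of the leading part computed in (1).

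First I would establish a reordering formula for moving a block of $p$'s past a block of $q$'s. Starting from $pq^{\,j}=q^{\,j}p+jq^{\,j-1}$ and inducting on $j+k$, one obtains
$$q^{\,j}p^{\,k}=p^{\,k}q^{\,j}+\sum_{m\ge 1}c_m\,p^{\,k-m}q^{\,j-m},$$
where the sum runs over $1\le m\le\min(j,k)$ and the $c_m\in\mathbb F$ are scalars whose exact values are irrelevant. The essential point is that each correction monomial $p^{\,k-m}q^{\,j-m}$ has $(\rho,\sigma)$-weight $(k-m)\rho+(j-m)\sigma=(k\rho+j\sigma)-m(\rho+\sigma)$, which is strictly less than $k\rho+j\sigma$ because $m\ge 1$ and $\rho+\sigma>0$. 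Multiplying on the left by $p^i$ and on the right by $q^l$ then gives, for any single pair of monomials,
$$p^iq^{\,j}\cdot p^{\,k}q^{\,l}=p^{\,i+k}q^{\,j+l}+(\text{terms of }(\rho,\sigma)\text{-weight}<(i+k)\rho+(j+l)\sigma),$$
so that the top $(\rho,\sigma)$-term of the product of two monomials is $p^{\,i+k}q^{\,j+l}$, carrying weight exactly $(i\rho+j\sigma)+(k\rho+l\sigma)$.

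Next I would pass to general $x=\sum\alpha_{ij}p^iq^{\,j}$ and $y=\sum\beta_{kl}p^{\,k}q^{\,l}$. Writing $v=v_{\rho,\sigma}(x)$ and $w=v_{\rho,\sigma}(y)$ and expanding $xy$ bilinearly, each product $p^iq^{\,j}\cdot p^{\,k}q^{\,l}$ contributes only monomials of weight at most $(i\rho+j\sigma)+(k\rho+l\sigma)\le v+w$, and by the previous step it can attain the extreme value $v+w$ only through its leading monomial $p^{\,i+k}q^{\,j+l}$, and only when $i\rho+j\sigma=v$ and $k\rho+l\sigma=w$, that is, when $(i,j)\in E_{\rho,\sigma}(x)$ and $(k,l)\in E_{\rho,\sigma}(y)$ simultaneously. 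Consequently the weight-$(v+w)$ homogeneous part of $xy$ is exactly
$$\sum_{\substack{(i,j)\in E_{\rho,\sigma}(x)\\(k,l)\in E_{\rho,\sigma}(y)}}\alpha_{ij}\beta_{kl}\,p^{\,i+k}q^{\,j+l},$$
whose associated commutative polynomial is visibly the product of the $(\rho,\sigma)$-polynomials of $x$ and of $y$.

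Finally, to conclude both statements at once, I would observe that this candidate top part does not vanish: since $\mathbb F$ is a field, hence an integral domain, the product of the two nonzero $(\rho,\sigma)$-polynomials of $x$ and $y$ is again a nonzero polynomial, so no cancellation occurs at weight $v+w$. This shows simultaneously that $v_{\rho,\sigma}(xy)=v+w$, proving (2), and that the $(\rho,\sigma)$-polynomial of $xy$ is the product of those of $x$ and $y$, proving (1). I expect the main work to be the bookkeeping in the first two steps, namely pinning down the reordering formula and verifying that every commutator correction genuinely lowers the weight, which is exactly the step that fails without $\rho+\sigma>0$; by contrast the non-cancellation input, though conceptually the decisive point for (2), is immediate once the integral-domain property of $\mathbb F$ is invoked.
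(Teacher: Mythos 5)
Your proof is correct, but there is nothing in the paper to compare it against: the paper states this lemma as an imported result, citing Dixmier (\cite[Lemma 2.4]{dix}), and gives no proof of its own. Your reconstruction is the standard argument (and in substance it is Dixmier's original one): filter $A$ by $(\rho,\sigma)$-weight, check via the reordering formula $q^{j}p^{k}=p^{k}q^{j}+\sum_{m\ge 1}c_{m}p^{k-m}q^{j-m}$ that every commutator correction drops the weight by $m(\rho+\sigma)>0$ --- the one place the hypothesis $\rho+\sigma>0$ is used --- so that the top-weight component of a product of monomials is the ordered monomial $p^{i+k}q^{j+l}$, and then observe that the weight-$(v+w)$ part of $xy$ is the image of the product of the two $(\rho,\sigma)$-polynomials in the commutative ring $\mathbb F[X,Y]$, which cannot vanish because $\mathbb F[X,Y]$ is an integral domain. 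In effect you are proving that the associated graded algebra of $A$ under this filtration is $\mathbb F[X,Y]$, which is the clean conceptual way to package both claims. Two minor points if you polish this: handle the degenerate case $x=0$ or $y=0$ by the paper's convention $v_{\rho,\sigma}(0)=-\infty$, under which both assertions are trivial; and note that the non-cancellation step really rests on $\mathbb F[X,Y]$ being a domain (immediate since $\mathbb F$ is a field, but that is the property being invoked). What your write-up buys over the paper's treatment is self-containedness; what the paper's citation buys is brevity, since this is a classical fact about the Weyl algebra.
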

  Let $x\in A$, and let $\rho, \sigma, k$ be three positive  numbers. Then it is clear that $$E_{k\rho,k\sigma}(x)=E_{\rho,\sigma}(x), \quad v_{k\rho,k\sigma}(x)=v_{\rho,\sigma}(x).$$
  It follows that $\rho_1/\sigma_1\neq \rho_2/\sigma_2$, if $E_{\rho_1,\sigma_1}(x)\neq E_{\rho_2,\sigma_2}(x)$, where $\rho_1,\sigma_1, \rho_2,\sigma_2$ are all positive numbers.\par
We assume in the sequel that $(\rho, \sigma)\in\mathcal P$, where $\mathcal P$ is the set of all pairs of relatively prime positive integers.
  We call $E_{\rho,\sigma}(x)$ an
 {\sl edge} of $x$ if it contains more than one point, and  a {\sl vertex} of $x$ if it is a singleton. Thus,  $E_{\rho,\sigma}(x)$ is an edge  if and only if the $(\rho,\sigma)$-polynomial of $x$ is not a monomial.  If the intersection of two edges of $x$ is a vertex, we say that the vertex joins these two edges, and the two edges are  adjacent. These terminologies can be geometrically described (see \cite[Introduction]{ggc}).  \par
 Example: Let $x=p^4 +p^3q+p^2q^2+q^3+q$. Then $x$ has two edges $$E_{1,1}(x)=\{(4,0), (3,1), (2,2)\}, \quad E_{1,2}(x)
 =\{(2,2), (0,3)\},$$  and they are joined by a vertex $E_{2,3}(x)=\{(2,2)\}$.\par
 Since $E(x)$ is finite set, $x$ can only have finitely many edges and vertices. It is also possible that $x$ has no edges. For example: $x=p^2q^2+pq+1$. Then $E_{\rho,\sigma}(x)=\{(2,2)\}$ for all $(\rho, \sigma)\in\mathcal P$.\par

  \section{Properties of solvable elements}
  Assume that $x\in A$ is solvable, and let  $y\in A$ be an element such that $[x,y]=1$. Then we have  $(\mathrm{ad}\ x)^2y=0$ and hence
  $y\in V(x)$. But $y\notin C(x)$. So we have $$V(x)\neq C(x)$$ and hence $x\notin \Delta_5$. Since $y\in N(x)$, so that $N(x)\neq C(x)$, we have $x\notin \Delta_3\cup \Delta_4$. Thus, we obtain the following corollary.

  \begin{corollary} If $x\in A$ is solvable, then $x$ is nilpotent.
  \end{corollary}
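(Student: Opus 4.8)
The plan is to locate $x$ directly within the partition $A-\mathbb F=\cup_{i=1}^5\Delta_i$ using only the defining relation $[x,y]=1$, rather than analyzing the centralizer or eigenspace decomposition in any detail. First I would dispose of a trivial preliminary: $x\notin\mathbb F$. Indeed, a scalar is central, so if $x\in\mathbb F$ then $[x,y]=0$ for every $y$, contradicting $[x,y]=1\neq 0$. Hence $x$ is a genuine element of $A-\mathbb F$ and therefore lies in exactly one of the five classes $\Delta_i$, so that asking whether $x$ is nilpotent is meaningful.

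The heart of the argument is a single computation. Since the identity $1$ is central in $A$, applying $\mathrm{ad}\,x$ once more to the relation gives $(\mathrm{ad}\,x)^2 y=(\mathrm{ad}\,x)\big([x,y]\big)=(\mathrm{ad}\,x)(1)=[x,1]=0$. Thus $y$ is annihilated by $(\mathrm{ad}\,x)^2$, so $y\in N(x)$ by taking $n=2$ in the definition of $N(x)$. On the other hand $(\mathrm{ad}\,x)y=[x,y]=1\neq 0$, so $y\notin C(x)$. Combining these two observations yields $N(x)\neq C(x)$.

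It then remains only to read off the conclusion from the definitions of the $\Delta_i$. Each of $\Delta_3$, $\Delta_4$, $\Delta_5$ is defined by the requirement $N(x)=C(x)$, so the strict inequality $N(x)\neq C(x)$ simultaneously excludes all three and forces $x\in\Delta_1\cup\Delta_2$, which is exactly the definition of $x$ being nilpotent; equivalently one may invoke the characterization noted in the preliminaries that $x$ is nilpotent if and only if $N(x)\neq C(x)$. I prefer routing the argument through $N(x)$ rather than through $V(x)$ and $\Delta_5$, since the containment $y\in N(x)$ is immediate from $(\mathrm{ad}\,x)^2y=0$ and requires no auxiliary fact about $V(x)$ coinciding with $C(x)$ on $\Delta_5$. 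I do not anticipate any real obstacle here: the only points needing care are the centrality of $1$, which is what makes the second application of $\mathrm{ad}\,x$ vanish, and the opening check that $x\notin\mathbb F$ so that the five-part partition applies to $x$ at all.
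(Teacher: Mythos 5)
Your proof is correct and takes essentially the same route as the paper: from $[x,y]=1$ one computes $(\mathrm{ad}\,x)^2y=[x,1]=0$, so $y\in N(x)\setminus C(x)$, and $N(x)\neq C(x)$ excludes $\Delta_3$, $\Delta_4$, $\Delta_5$ simultaneously. The only difference is cosmetic: the paper rules out $\Delta_5$ via the auxiliary observation $y\in V(x)$ with $V(x)\neq C(x)$, whereas you handle all three classes uniformly through $N(x)$, which is if anything cleaner given that the paper's stated definition of $\Delta_5$ already requires $N(x)=C(x)$.
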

  \begin{lemma}\cite[Lemma 2.7]{dix} Assume that $\rho$ and $\sigma$ are integers such that $\rho+\sigma>0$. Let $x$ and $y$ be elements in $A$, and let $f$ and $g$ be respectively their $(\rho, \sigma)$-polynomials.   Then there exist  elements $\t,u\in A$ such that \par (a) $[x, y]=\t+u$;\par  (b) $E(\t)=E(\t, \rho, \sigma)$ and $v_{\rho, \sigma}(\t)=v_{\rho,\sigma}(x)+v_{\rho,\sigma}(y)-(\rho +\sigma)$;\par (c) $v_{\rho, \sigma}(u)<v_{\rho,\sigma}(x)+v_{\rho,\sigma}(y)-(\rho+\sigma)$.\par In addition, the following conditions are equivalent:\par
  (1) $\t=0$; \par (2) $g^{v_{\rho,\sigma}(x)}=cf^{v_{\rho,\sigma}(y)}$ for some nonzero scalar $c$.
  \end{lemma}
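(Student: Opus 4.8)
The plan is to identify $\tilde t$ with the symbol of the commutator $[x,y]$ for the $(\rho,\sigma)$-filtration of $A$ and to recognise that symbol as the Poisson bracket
\[
\{f,g\}:=\partial_X f\,\partial_Y g-\partial_Y f\,\partial_X g
\]
of the $(\rho,\sigma)$-polynomials. Concretely, for $n\in\mathbb Z$ put $F_n=\{z\in A:v_{\rho,\sigma}(z)\le n\}$; since $\rho,\sigma$ are integers this is a $\mathbb Z$-filtration, and by the multiplicativity of the $(\rho,\sigma)$-polynomial proved above the class of $z$ in $F_n/F_{n-1}$ is its $(\rho,\sigma)$-polynomial, so the associated graded ring is the commutative weighted polynomial ring $\mathbb F[X,Y]$ with $\deg X=\rho$, $\deg Y=\sigma$. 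Once I show that the commutator lowers the filtration by exactly $\rho+\sigma$ and that the resulting leading term is $\{f,g\}$, all of (a)--(c) and the equivalence will follow.

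First I would establish the estimate $v_{\rho,\sigma}([z,w])\le v_{\rho,\sigma}(z)+v_{\rho,\sigma}(w)-(\rho+\sigma)$, i.e. $[F_m,F_n]\subseteq F_{m+n-(\rho+\sigma)}$, which it suffices to check on monomials. Reordering by means of $q^jp^k=p^kq^j-jk\,p^{k-1}q^{j-1}+(\text{lower})$ shows that $p^iq^j\,p^kq^l$ and $p^kq^l\,p^iq^j$ share the leading term $p^{i+k}q^{j+l}$, which therefore cancels in the commutator, while the next terms drop the $(\rho,\sigma)$-degree by exactly $\rho+\sigma$; a short computation gives
\[
[p^iq^j,p^kq^l]=(il-jk)\,p^{i+k-1}q^{j+l-1}+(\text{lower }(\rho,\sigma)\text{-degree}),
\]
and $(il-jk)X^{i+k-1}Y^{j+l-1}$ is precisely $\{X^iY^j,X^kY^l\}$. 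Granting the estimate, I define $\tilde t$ to be the $(\rho,\sigma)$-term of $[x,y]$ of $(\rho,\sigma)$-degree $D:=v_{\rho,\sigma}(x)+v_{\rho,\sigma}(y)-(\rho+\sigma)$ and $u:=[x,y]-\tilde t$; then $\tilde t$ is $(\rho,\sigma)$-homogeneous, $v_{\rho,\sigma}(u)<D$, and (a)--(c) hold. The induced bracket on the associated graded is a biderivation determined by its value on the generators; since $[p,q]=1$ and $v_{\rho,\sigma}(p)+v_{\rho,\sigma}(q)-(\rho+\sigma)=0$ it sends $(X,Y)\mapsto 1$, hence equals $\{\cdot,\cdot\}$, and the $(\rho,\sigma)$-polynomial of $\tilde t$ is $\{f,g\}$. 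In particular $\tilde t=0$ if and only if $\{f,g\}=0$. I expect this bookkeeping --- pinning down which monomial of $[p^iq^j,p^kq^l]$ realises the top $(\rho,\sigma)$-degree --- to be the main obstacle.

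It then remains to prove, for $(\rho,\sigma)$-homogeneous $f,g$ of $(\rho,\sigma)$-degrees $a:=v_{\rho,\sigma}(x)$ and $b:=v_{\rho,\sigma}(y)$, that $\{f,g\}=0$ is equivalent to $g^{a}=c\,f^{b}$ for some nonzero $c\in\mathbb F$. The implication $(2)\Rightarrow(1)$ is immediate from the Leibniz rule: $g^a=cf^b$ gives $a\,g^{a-1}\{f,g\}=\{f,g^a\}=c\{f,f^b\}=0$, and since $\mathbb F[X,Y]$ is a domain of characteristic zero, $\{f,g\}=0$. For $(1)\Rightarrow(2)$ I would invoke Euler's identity $\rho X\,\partial_X f+\sigma Y\,\partial_Y f=a f$ (and the analogue for $g$) for weighted-homogeneous polynomials, and expand the two coefficients to obtain the relation
\[
a\,f\,dg-b\,g\,df=\{f,g\}\,(\rho X\,dY-\sigma Y\,dX).
\]
When $\{f,g\}=0$ this forces $a\,f\,dg=b\,g\,df$, i.e. $d\log(g^{a}/f^{b})=0$, so the rational function $g^{a}/f^{b}$ is constant, which in characteristic zero yields $g^{a}=c\,f^{b}$. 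Combined with the identification of the $(\rho,\sigma)$-polynomial of $\tilde t$ as $\{f,g\}$, this proves the stated equivalence and completes the argument.
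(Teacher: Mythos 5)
The paper gives no proof of this lemma: it is imported verbatim from \cite[Lemma 2.7]{dix}, so the only proof to compare with is Dixmier's original one, and your argument is essentially a correct reconstruction of it. Your monomial computation $[p^iq^j,p^kq^l]=(il-jk)\,p^{i+k-1}q^{j+l-1}+(\text{lower }(\rho,\sigma)\text{-degree})$, the resulting identification of the top-degree part of $[x,y]$ with the Poisson bracket $\{f,g\}=\partial_Xf\,\partial_Yg-\partial_Yf\,\partial_Xg$ of the $(\rho,\sigma)$-polynomials, and the weighted Euler identity giving $\{f,g\}=0\Leftrightarrow g^{v_{\rho,\sigma}(x)}=cf^{v_{\rho,\sigma}(y)}$ are all sound. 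The only loose ends are degenerate conventions: when $\t=0$ clause (b) must be read vacuously, and the Euler-identity step tacitly assumes $f,g\neq 0$ and nonzero (indeed positive) $(\rho,\sigma)$-degrees, so that one may divide by $a$ and form $g^{a}/f^{b}$; these cases are harmless and are in any event excluded wherever this paper invokes the lemma, since there $v_{\rho,\sigma}(x)\geq\rho+\sigma>0$ and $v_{\rho,\sigma}(y)>0$.
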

  \begin{lemma} Let $x\in A_i$ and  $y\in A_j$ with $i>0$ and $j<0$. If \ $[x, y]=0$, then either \ $x=0$ or\ $y=0$.
  \end{lemma}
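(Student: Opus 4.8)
The plan is to argue by contradiction: assume $x\neq 0$ and $y\neq 0$, and derive a contradiction from $[x,y]=0$. The key observation is that an element lying in a single graded component $A_s$ has an extremely degenerate Newton diagram, so that for every admissible weight its $(\rho,\sigma)$-polynomial is a single monomial. This is exactly what makes the vanishing criterion of the preceding lemma (\cite[Lemma 2.7]{dix}) usable, since it collapses the proportionality of two (a priori high) powers of polynomials into a comparison of monomial exponents.

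First I would record the monomial observation. Fix $(\rho,\sigma)=(1,1)\in\mathcal P$. Every monomial $p^aq^{a+i}$ occurring in $x\in A_i$ has weight $a\rho+(a+i)\sigma=a(\rho+\sigma)+i\sigma$, which is strictly increasing in $a$ because $\rho+\sigma>0$; hence $E_{\rho,\sigma}(x)$ is the single point with the largest $X$-exponent, and the $(\rho,\sigma)$-polynomial of $x$ is a nonzero monomial $f=\alpha X^aY^{a+i}$. Likewise the $(\rho,\sigma)$-polynomial of $y\in A_j$ is a nonzero monomial $g=\beta X^bY^{b+j}$ with $b+j\ge 0$, so $b\ge -j\ge 1$. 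Both $f,g$ are nonzero precisely because $x,y\neq 0$.

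Next I would invoke the preceding lemma, writing $[x,y]=\t+u$ as there. Since $[x,y]=0$ and $v_{\rho,\sigma}(u)<v_{\rho,\sigma}(\t)$ whenever $\t\neq 0$, the top part must vanish, $\t=0$; the stated equivalence then forces $g^{\,v_{\rho,\sigma}(x)}=c\,f^{\,v_{\rho,\sigma}(y)}$ for some nonzero scalar $c$. Matching the $X$- and $Y$-exponents of these two monomials gives $b\,v_{\rho,\sigma}(x)=a\,v_{\rho,\sigma}(y)$ and $(b+j)\,v_{\rho,\sigma}(x)=(a+i)\,v_{\rho,\sigma}(y)$, and subtracting the first from the second yields the clean relation $j\,v_{\rho,\sigma}(x)=i\,v_{\rho,\sigma}(y)$.

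The contradiction is now a matter of signs. Directly, $v_{\rho,\sigma}(x)=a(\rho+\sigma)+i\sigma\ge i\sigma>0$ and $v_{\rho,\sigma}(y)=b(\rho+\sigma)+j\sigma\ge -j\rho>0$, so in $j\,v_{\rho,\sigma}(x)=i\,v_{\rho,\sigma}(y)$ the left-hand side is negative while the right-hand side is positive, which is absurd. The one point needing care is the monomial collapse of $E_{\rho,\sigma}$ for graded elements; everything after it is forced and, pleasantly, no case distinction on the sign of $i+j$ is needed. As a more computational alternative one can instead put $x=f(h)q^i$ and $y=g(h)p^{-j}$ in normal form, use $f(h)q^n=q^nf(h+n)$ together with the product formulas for $q^mp^m$ and $p^mq^m$ to reduce $[x,y]=0$ to a polynomial identity in $\mathbb F[h]$, and compare the sums of the roots of the two sides; there the same obstruction resurfaces as the impossible equation $i\deg g+(-j)\deg f=ij<0$ (now at the cost of splitting into the cases $i+j\ge 0$ and $i+j<0$, the latter handled via $\omega$).
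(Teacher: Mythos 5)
Your proof is correct and takes essentially the same route as the paper's: fix $(\rho,\sigma)=(1,1)$, note that the $(1,1)$-polynomials of $x$ and $y$ are single monomials, apply Lemma 3.2 to force $g^{v_{1,1}(x)}=c f^{v_{1,1}(y)}$, and read off a contradiction from the exponents. The only (cosmetic) differences are that you get the monomial form directly from the grading rather than from the normal forms $f_i(h)q^i$, $g(h)p^{-j}$, and you phrase the contradiction as a sign clash in $j\,v_{1,1}(x)=i\,v_{1,1}(y)$ instead of the paper's ratio comparison $k/(k+i)=(l+m)/l$ --- which in fact sidesteps the sign slips in the paper's write-up.
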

  \begin{proof} Suppose on the contrary that $x=f_i(h)q^i\neq 0$ and $y=f_{-j}(h)p^j\neq 0$, where $f_i(X)$ and $g_{-j}(X)$ are polynomials of degree respectively $k$ and $l$.  Choose $\rho=\sigma=1$. Then we have $$v_{1,1}(x)=2k+i,\quad  v_{1,1}(y)=2l+j,$$ and the $(\rho, \sigma)$-polynomials of $x$ and $y$ are respectively $cX^kY^{k+i}$ and $c'X^{l+j}Y^l$, where $c,c'\in \mathbb C\setminus 0$.  \par Since $[x, y]=0$, we have by Lemma 3.2 that   $$(cX^kY^{k+i})^{v_{1,1}(y)}=(c'X^{l+j}Y^l)^{v_{1,1}(x)},$$  implying that $k/(k+i)=(l+j)/l$, a contradiction. Thus, we must have $x=0$ or $y=0$.
  \end{proof}
 \begin{lemma} Let $x\in A$ be  solvable,  let $y\in A$ be an element such that  $[x,y]=1$, and let $(\rho,\sigma)\in\mathcal P$ be a pair such that $v_{\rho,\sigma}(x)\geq \rho+\sigma$.  Write $[x, y]=\t+u$ as in Lemma 3.2.  Then  $\t=0$.
\end{lemma}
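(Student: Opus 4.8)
The plan is to compare the $(\rho,\sigma)$-degrees of the two sides of the identity $[x,y]=1$ and to rule out $\t\neq 0$ by a contradiction on these degrees. The essential point is that the right-hand side $1=p^0q^0$ is as small as possible in the $(\rho,\sigma)$-filtration, namely $v_{\rho,\sigma}(1)=0$, whereas the leading part $\t$ of the bracket, were it nonzero, would be forced to have strictly positive $(\rho,\sigma)$-degree under the hypothesis $v_{\rho,\sigma}(x)\geq\rho+\sigma$.

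First I would record two elementary degree facts. Since $\rho,\sigma>0$ and every monomial $p^iq^j$ in $A$ has $i,j\geq 0$, one has $i\rho+j\sigma\geq 0$ with equality only for $(i,j)=(0,0)$; hence $v_{\rho,\sigma}(z)\geq 0$ for every nonzero $z\in A$, with $v_{\rho,\sigma}(z)=0$ precisely when $z$ is a nonzero scalar. In particular $v_{\rho,\sigma}(1)=0$. Next, because $[x,y]=1\neq 0$, the element $y$ cannot be central, so $y\notin\mathbb F$; it therefore has some monomial $(i,j)\neq(0,0)$ in $E(y)$, which gives $v_{\rho,\sigma}(y)\geq i\rho+j\sigma>0$.

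Now I would argue by contradiction, assuming $\t\neq 0$. By Lemma 3.2(b) the element $\t$ is $(\rho,\sigma)$-homogeneous of degree $d:=v_{\rho,\sigma}(x)+v_{\rho,\sigma}(y)-(\rho+\sigma)$, while by (c) we have $v_{\rho,\sigma}(u)<d$. Since $u$ contributes no monomial at level $d$, the top part of $\t$ survives without cancellation in $\t+u$, so $v_{\rho,\sigma}(\t+u)=d$. As $\t+u=[x,y]=1$, this forces $d=v_{\rho,\sigma}(1)=0$. But the positivity facts above yield $d=v_{\rho,\sigma}(x)+v_{\rho,\sigma}(y)-(\rho+\sigma)\geq v_{\rho,\sigma}(y)>0$, using $v_{\rho,\sigma}(x)\geq\rho+\sigma$. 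This contradicts $d=0$, so $\t=0$.

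The argument is short, and the only step needing a little care is the claim $v_{\rho,\sigma}(\t+u)=d$: it rests on the fact that adding an element of strictly smaller $(\rho,\sigma)$-degree cannot lower the degree of a nonzero homogeneous element, which is immediate from the definition of $v_{\rho,\sigma}$ as a supremum over $E(\cdot)$. I do not anticipate a genuine obstacle here; the hypothesis $v_{\rho,\sigma}(x)\geq\rho+\sigma$ is exactly what converts $d=0$ into a contradiction via the strict positivity of $v_{\rho,\sigma}(y)$, so the role of that hypothesis is the only conceptual point worth highlighting.
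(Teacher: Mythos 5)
Your proof is correct and follows essentially the same route as the paper: both argue by contradiction, using Lemma 3.2(b),(c) to conclude that a nonzero $\t$ would force $v_{\rho,\sigma}([x,y])=v_{\rho,\sigma}(x)+v_{\rho,\sigma}(y)-(\rho+\sigma)\geq v_{\rho,\sigma}(y)>0$, contradicting $v_{\rho,\sigma}(1)=0$. The only difference is presentational: you spell out explicitly why $v_{\rho,\sigma}(\t+u)=v_{\rho,\sigma}(\t)$ and why $v_{\rho,\sigma}(y)>0$, steps the paper leaves implicit.
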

\begin{proof} By assumption we have  $y\in A-\mathbb F$,
   which gives $v_{\rho,\sigma}(y)>0$.\par Suppose on the contrary that $\t\neq 0$. By
 Lemma 3.2 we have  $$\begin{aligned}v_{\rho,\sigma}([x, y])&=v_{\rho,\sigma}(\t)\\&=v_{\rho,\sigma}(x)+v_{\rho,\sigma}(y)-(\rho+\sigma)\\&\geq v_{\rho,\sigma}(y)\\&>0,\end{aligned}$$  contrary to the fact that $v_{\rho,\sigma}(1)=0$. So we must have $\t=0$.
\end{proof}

  \begin{lemma}Let $x\in A_i$ with $i>0$.    If  $v_{\rho,\sigma}(x)\geq \rho +\sigma$ for some $(\rho,\sigma)\in\mathcal P$, then  $x$ is unsolvable.
  \end{lemma}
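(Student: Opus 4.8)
The plan is to argue by contradiction, using the $\mathbb Z$-grading to normalize the witness and then reading a numerical impossibility off the leading-term equation of Lemma 3.2. Suppose $x$ is solvable. Since $x\in A_i$ with $i>0$, write $x=\phi(h)q^i$ with $k:=\deg\phi\ge 0$, and pick $y\in A$ with $[x,y]=1$. Decomposing $y=\sum_s y_s$ into graded components $y_s\in A_s$ and using $[x,y_s]\in A_{i+s}$, the comparison of $A_0$-components in $[x,y]=1$ forces $[x,y_{-i}]=1$ (and $[x,y_s]=0$ for $s\ne -i$). Thus I may replace $y$ by $y_{-i}$ and assume $y\in A_{-i}$; since $-i<0$ this gives $y=\psi(h)p^i$ with $\psi\ne 0$ and $m:=\deg\psi\ge 0$.

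Next I arrange to invoke Lemma 3.4 at the pair $(\rho,\sigma)=(1,1)\in\mathcal P$. By Lemma 2.1 one computes $v_{1,1}(x)=2k+i$, and the hypothesis $v_{\rho,\sigma}(x)\ge\rho+\sigma$ for some $(\rho,\sigma)\in\mathcal P$ excludes exactly the case $k=0,\ i=1$ (that is, $x\in\mathbb F q$); in every remaining case $v_{1,1}(x)=2k+i\ge 2=\rho+\sigma$. Writing $[x,y]=\t+u$ as in Lemma 3.2 relative to $(1,1)$, Lemma 3.4 then gives $\t=0$.

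Finally I extract the contradiction. By Lemma 2.1 the $(1,1)$-polynomials of $x$ and $y$ are the monomials $f=aX^kY^{k+i}$ and $g=bX^{m+i}Y^m$ with $a,b\ne 0$, and $v_{1,1}(x)=2k+i$, $v_{1,1}(y)=2m+i$. Substituting $\t=0$ into the equivalence of Lemma 3.2 yields $g^{\,2k+i}=c\,f^{\,2m+i}$ for some nonzero scalar $c$, an identity between two monomials. Comparing the exponents of $X$ gives $(m+i)(2k+i)=k(2m+i)$, which simplifies to $i(k+m+i)=0$; as $i\ge 1$ and $k+m+i\ge 1$, this is absurd. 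Hence no such $y$ exists and $x$ is unsolvable.

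I expect the graded reduction in the first step to be the crux of the argument. If $y$ is left arbitrary, then $g$ is only known to be some $(1,1)$-homogeneous leading monomial $bX^lY^n$, and the exponent equations derived from $g^{v_{1,1}(x)}=c\,f^{v_{1,1}(y)}$ both collapse to the single consistent relation $l(k+i)=kn$, producing no contradiction. It is precisely the constraint $y\in A_{-i}$, which pins the leading monomial of $y$ to $(l,n)=(m+i,m)$, that rigidifies the exponents and makes the $X$-comparison fail. The remaining steps are routine degree bookkeeping with the relations $f(h)p^n=p^nf(h-n)$ and $f(h)q^n=q^nf(h+n)$.
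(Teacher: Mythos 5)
Your proof is correct and follows essentially the same route as the paper's: reduce to $y\in A_{-i}$ via the $\mathbb Z$-grading, apply Lemma 3.2 together with Lemma 3.4 to get the monomial power identity $g^{v(x)}=c f^{v(y)}$, and derive a contradiction from the exponent comparison (the paper cites the argument of Lemma 3.3 for this last step, which is exactly your $X$-exponent computation). The only cosmetic difference is that you normalize to the pair $(1,1)$ after checking the hypothesis forces $v_{1,1}(x)\geq 2$, whereas the paper applies Lemma 3.4 directly at the hypothesized pair $(\rho,\sigma)$; both are valid.
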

  \begin{proof} Suppose on the contrary that $x$ is solvable. Let $y\in A$ be an element such that $[x, y]=1$. Since $1\in A_0$, we must have $y\in A_{-i}$.  \par
  Let $f$ and $g$ be respectively the $(\rho,\sigma)$-polynomials of $x$ and $y$.  Write  $[x, y]=\t+u$ as in Lemma 3.2. Then we have by Lemma 3.4 that $\t=0$.  Using Lemma 3.2 once again we get $$g^{v_{\rho,\sigma}(x)}=f^{v_{\rho,\sigma}(y)},$$ which leads to a contradiction by a similar  argument as that used in the proof of Lemma 3.3. Thus, $x$ is unsolvable.
  \end{proof}

  \begin{lemma}  Let $x\in \Delta_1$ be a solvable element, and let $y\in A$ be an element such that\ $[x, y]=1$. Then $y\in \Delta_1$.
  \end{lemma}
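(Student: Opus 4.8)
The plan is to exploit the symmetry of the commutation relation, reduce local nilpotence of $\mathrm{ad}\,y$ to its restriction to $C(x)$ by a filtration argument, and then settle that base case using the structure of the centralizer. First I would observe that since $[y,-x]=[x,y]=1$, the element $y$ is itself solvable, so by Corollary 3.1 it is nilpotent, i.e. $y\in\Delta_1\cup\Delta_2$. For a nilpotent element both candidate classes $\Delta_1$ and $\Delta_2$ satisfy $D=C$, so they are distinguished solely by whether $N(y)=A$. Hence it suffices to prove $N(y)=A$, that is, that $\mathrm{ad}\,y$ is locally nilpotent on all of $A$. The structural fact I would record at the outset is that $\mathrm{ad}\,x$ and $\mathrm{ad}\,y$ commute, because $[\mathrm{ad}\,x,\mathrm{ad}\,y]=\mathrm{ad}\,[x,y]=\mathrm{ad}\,1=0$.

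Next I would set up a filtration. Since $x\in\Delta_1$ we have $N(x)=A$, so $\mathrm{ad}\,x$ is locally nilpotent and $A=\bigcup_{n\ge 1}F_n$, where $F_n=\ker(\mathrm{ad}\,x)^n$. Two observations drive the reduction: $\mathrm{ad}\,x$ maps $F_{n+1}$ into $F_n$, and, since $\mathrm{ad}\,y$ commutes with $\mathrm{ad}\,x$, each $F_n$ is $\mathrm{ad}\,y$-invariant. I would then prove by induction on $n$ that $\mathrm{ad}\,y$ is locally nilpotent on $F_n$. Given $z\in F_{n+1}$, the element $\mathrm{ad}\,x\,(z)$ lies in $F_n$, so by the inductive hypothesis $(\mathrm{ad}\,y)^m(\mathrm{ad}\,x\,z)=0$ for some $m$; commutativity gives $\mathrm{ad}\,x\,\big((\mathrm{ad}\,y)^m z\big)=0$, so $(\mathrm{ad}\,y)^m z\in F_1=C(x)$, and applying the case $n=1$ once more annihilates it. Thus everything comes down to the base case: $\mathrm{ad}\,y$ is locally nilpotent on $C(x)$.

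The base case is where I expect the main difficulty. Here $C(x)$ is a commutative domain containing $x$, and $\mathrm{ad}\,y$ restricts to a derivation $\partial$ of $C(x)$ with $\partial(x)=[y,x]=-1$; equivalently $-x$ is a slice for $\partial$. If one knows that $C(x)=\mathbb{F}[x]$, then $\partial=-\tfrac{d}{dx}$ acts by $\partial(x^k)=-k\,x^{k-1}$ and is manifestly locally nilpotent, which together with the previous two paragraphs finishes the proof. My plan for this identification is to invoke Dixmier's description of the centralizer of a strictly nilpotent element, namely that $C(x)$ is a polynomial algebra $\mathbb{F}[u]$ integral over $\mathbb{F}[x]$, and then to use solvability: writing $x=g(u)$ and applying $\partial$ to $\partial(x)=-1$ yields $g'(u)\,\partial(u)=-1$, so $g'(u)$ is a unit of $\mathbb{F}[u]$, hence a nonzero scalar, forcing $\deg g=1$ and $C(x)=\mathbb{F}[u]=\mathbb{F}[x]$. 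It is precisely here that solvability is indispensable: for the non-solvable element $p^{2}\in\Delta_1$ one has $C(p^{2})=\mathbb{F}[p]\supsetneq\mathbb{F}[p^{2}]$ and no such slice exists, so any correct proof of the base case must use the hypothesis that $x$ is solvable and not merely that $x\in\Delta_1$.
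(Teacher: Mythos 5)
Your proof is correct, but it takes a genuinely different route from the paper's. The paper argues concretely: using Dixmier's Theorem 9.1 it reduces to the case $x\in\mathbb F[q]$, then pins down the graded components of $y$ one by one (via Lemmas 3.3 and 3.5 and the relation $f(h)q^n=q^nf(h+n)$) to get the explicit normal form $y=b_{-1}p+b_0+b_1q+\cdots+b_tq^t$, and finally exhibits $y$ as the image of $b_{-1}p$ under the automorphism $\exp\{-\mathrm{ad}\,b_{-1}^{-1}(b_0q+\cdots+\frac{b_t}{t+1}q^{t+1})\}$, so $y\in\Delta_1$ by $\mathrm{Aut}(A)$-invariance. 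You instead prove the abstract statement $N(y)=A$ directly: since $\mathrm{ad}\,x$ and $\mathrm{ad}\,y$ commute, the kernel filtration $F_n=\ker(\mathrm{ad}\,x)^n$ of the locally nilpotent $\mathrm{ad}\,x$ is $\mathrm{ad}\,y$-stable, and an induction reduces everything to local nilpotence of $\mathrm{ad}\,y$ on $C(x)$; there you identify $C(x)=\mathbb F[x]$ using solvability, and conclude via Corollary 3.1 and the definition of the partition. Both arguments are sound. Two remarks on the trade-off. First, your base case can be shortened: the paper itself (in Lemma 3.7) cites \cite[Theorem 2.11]{ggc1}, which says that solvability of $x$ alone gives $C(x)=\mathbb F[x]$; quoting that makes your slice argument ($g'(u)$ a unit, etc.) unnecessary, though your derivation from Dixmier's Theorem 9.1 plus $C(f(q))=C(q)$ is a fine self-contained substitute. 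Second, the paper's computational proof earns its keep later: the proof of Theorem 3.8 explicitly reuses the normal form $\Phi\varphi(p)=b_{-1}p+b_0+\cdots+b_tq^t$ established inside the proof of Lemma 3.6, so replacing the paper's proof with yours would require reproving that normal form where Theorem 3.8 invokes it; your filtration argument, while cleaner and more robust as a proof of the lemma itself, does not produce it.
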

  \begin{proof} Since $x\in \Delta_1$, by \cite[Theorem 9.1]{dix} there exists  $\Phi\in \text{Aut}(A)$ such that $\Phi(x)\in \mathbb F[q]$.  Since $\Phi$ stabilizes $\Delta_1$,  we may assume that $x\in \mathbb F[q]$.\par Let $$x=c_0+c_dq^d+c_{d+1}q^{d+1}+\cdots +c_nq^n,\quad  c_i\in\mathbb F,\  c_d\neq 0$$ and $$y=y_{-s}+\cdots +y_0+\cdots +y_t,\ y_i\in A_i,\ s,t\in \mathbb N.$$ Since $[c_iq^i, y_j]\in A_{i+j}$ for all\ $i,j$,  we have $$\begin{aligned}\begin{split}[x,y]&=[c_dq^d+\cdots +c_nq^n, y_{-s}+\cdots+y_0+\cdots  +y_t]\\&=[c_dq^d, y_{-s}]+u,\end{split}\end{aligned}$$ where $u\in \sum_{i>d-s}A_i$.\par  If $s\neq d$, since $1\in A_0$,
    we have $[c_dq^d, y_{-s}]=0$ and hence
    $y_{-s}=0$  by Lemma 3.3. It follows that $y=y_{-d}+\cdots +y_0+\cdots +y_t$.\par  If $d>1$,  then since $v_{1,1}(x)\geq d\geq 2$,  we have  by Lemma 3.5 that $x$ is unsolvable, contrary to the assumption.  Therefore, we must have $d=1$, which implies that  $[x_1, y_{-1}]=1$, so that $y_{-1}$ is solvable.   Now Lemma 3.5 implies that $v_{1,1}(y_{-1})=1$.  Thus, we have $y_{-1}=b_{-1}p$ for some $b_{-1}\in \mathbb F\setminus 0$ and hence $$y=b_{-1}p+f_0(h)+\cdots +f_t(h)q^t.$$ If $t>1$, then $[x,y]=1$ implies that $[q^n, f_t(h)q^t]=0$, which gives $$q^nf_t(h)q^t-f_t(h)q^{t+h}=
   [f_t(h-n)-f_t(h)]q^{t+n}=0.$$ Since $A$ has no zero divisors, we have $f_t(h-n)-f_t(h)=0$, implying that $f_t(X)$ is a scalar (denoted by $b_t$). It follows that $$[x, y_t]=[x, b_tq^t]=0$$ and hence $[x, y-y_t]=1$.\par
     Apply the above discussion to $y-y_t$ in place of $y$. Continue this process.  Then we obtain $$y_i=b_iq^i, \ b_i\in\mathbb F\quad \text{for\ all}\quad 0\leq i\leq t,$$   and hence $y=b_{-1}p+b_0+\cdots +b_tq^t$. \par
     By \cite[Theorem 8.10]{dix}, the group $\text{Aut}(A)$ is generated by elements $$\text{exp} (\text{ad}\ \lambda p^n),\quad  \text{exp} (\text{ad}\ \lambda q^n),\quad  \lambda \in \mathbb F,\ n\in \mathbb N\setminus 0.$$  It follows that  $\text{exp} (\text{ad}\ f(q))\in \text{Aut}(A)$ for any polynomial $f(X)\in \mathbb F[X]$ with $\text{deg}f(X)>0$.\par
       Since $p\in \Delta_1$, we have $$y=\exp\{-\text{ad}\ b_{-1}^{-1}(b_0q+\cdots +b_t\frac{1}{t+1}q^{t+1})\}(b_{-1}p)\in \Delta_1.$$
   This completes the proof.
  \end{proof}
\begin{lemma} Let $x\in A-\mathbb F$, and let $u=c_0+c_1x+\cdots +c_nx^n\in \mathbb F[x]$ with $c_n\neq 0$. If $u$ is solvable, then $n=1$.
\end{lemma}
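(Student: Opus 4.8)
The plan is to exploit that $x$ lies in the centralizer $C(u)$ and to turn the derivation $\mathrm{ad}\ y$ into an ordinary chain rule there. Suppose $u$ is solvable and fix $y$ with $[u,y]=1$. Since $[u,y]\neq 0$ we have $u\notin\mathbb F$, so $n\geq 1$; moreover, as $x\in A-\mathbb F$ the powers $1,x,x^2,\dots$ are linearly independent, because by Lemma 3.1 their $v_{1,1}$-degrees $k\,v_{1,1}(x)$ are pairwise distinct, whence $\mathbb F[x]\cong\mathbb F[X]$. Because $[u,x]=[f(x),x]=0$ we have $x\in C(u)$, and I will use the classical fact (see \cite{dix}) that the centralizer $C(u)$ of the non-scalar element $u$ is commutative.

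First I would show that $D:=\mathrm{ad}\ y$ stabilizes $C(u)$. For $z\in C(u)$ the Jacobi identity gives
\[ [u,[y,z]]=[[u,y],z]+[y,[u,z]]=[1,z]+[y,0]=0, \]
so $D(z)=[y,z]\in C(u)$. In particular $w:=[x,y]=-D(x)$ lies in $C(u)$; since $C(u)$ is commutative, $x$ and $w$ commute, which is precisely what is needed to differentiate through the polynomial $f$.

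Next, working inside the commutative ring $C(u)$, on which $D$ restricts to an $\mathbb F$-linear derivation, the chain rule applies and yields
\[ -1=D(u)=D(f(x))=f'(x)\,D(x)=-f'(x)\,w, \]
so $f'(x)\,w=1$ and hence $f'(x)$ is a unit of $A$. A short valuation argument finishes: if $ab=1$ then by Lemma 3.1 $v_{1,1}(a)+v_{1,1}(b)=v_{1,1}(1)=0$, while $v_{1,1}\geq 0$ on $A\setminus 0$ with equality only on $\mathbb F\setminus 0$, so $a,b\in\mathbb F\setminus 0$. Thus $f'(x)\in\mathbb F\setminus 0$, and by the injectivity of $\mathbb F[X]\to A,\ g\mapsto g(x)$, the polynomial $f'$ is constant, i.e. $\deg f'=n-1=0$. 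Combined with $n\geq 1$ this forces $n=1$.

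The main obstacle is the noncommutativity underlying the chain rule: a priori $w=[x,y]$ need not commute with $x$, in which case $D(f(x))$ is the unwieldy sum $\sum_k c_k\sum_i x^i(Dx)x^{k-1-i}$ rather than $f'(x)\,D(x)$, and the argument collapses. The point that dissolves this difficulty is the combination of two facts: that $\mathrm{ad}\ y$ stabilizes $C(u)$ (Jacobi, using that $[u,y]=1$ is central) and that $C(u)$ is commutative. Together they give $[x,w]=0$, reducing the computation to the commutative chain rule and to the triviality of the unit group of $A$.
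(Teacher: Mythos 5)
Your proof is correct, but it follows a genuinely different route from the paper's. The paper's argument is essentially two lines resting on a strong structure theorem for solvable elements: by \cite[Theorem 2.11]{ggc1}, solvability of $u$ gives $C(u)=\mathbb F[u]$; since $x\in C(u)$, $x$ is then a polynomial in $u=c_0+c_1x+\cdots+c_nx^n$, so if $n>1$ the non-scalar $x$ would satisfy a nonzero polynomial identity, contradicting \cite[Proposition 2.5]{dix}. You never invoke $C(u)=\mathbb F[u]$; instead you use only the classical, much weaker fact that centralizers of non-scalar elements are commutative, and you feed the solvability back in through the witness $y$: the Jacobi identity (using that $[u,y]=1$ is central) shows that $\mathrm{ad}\ y$ stabilizes $C(u)$, so $[x,y]$ commutes with $x$; the commutative chain rule then yields $f'(x)\,[x,y]=1$; and since the unit group of $A$ is $\mathbb F\setminus 0$ (your degree argument --- note that the multiplicativity of $v_{1,1}$ you use is Lemma 2.1 of the paper, not ``Lemma 3.1'', which does not exist), $f'(x)$ is a nonzero scalar, forcing $n=1$ by transcendence of $x$ and characteristic zero. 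The trade-off: the paper's proof is shorter but leans on a specialized recent theorem from \cite{ggc1}, while yours is longer but essentially self-contained, needing only the Amitsur/Dixmier commutativity of centralizers, the Jacobi identity, and the graded degree function; both proofs share the same skeleton, namely the observation $x\in C(u)$ together with the transcendence of non-scalar elements of $A$ over $\mathbb F$ as the final contradiction.
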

\begin{proof} Since $u$ is solvable, we have $C(u)=\mathbb F[u]$ by \cite[Theorem 2.11]{ggc1}. Therefore $x$ is a polynomial of $u$, since $x\in C(u)$. It follows that if $n>1$, there exists a nonzero polynomial $f(X)\in \mathbb F[X]$  such that $f(x)=0$, contrary to \cite[Proposition 2.5]{dix}. So we must have $n=1$.

\end{proof}

\begin{theorem} The following statements are equivalent:\par
(1) Each endomorphism of $A$ is an automorphism.\par
(2) Each solvable element in $A$ is strictly nilpotent.\par
(3) Each element in $\Delta_2$ is unsolvable.
\end{theorem}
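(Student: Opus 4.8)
The plan is to route everything through the elementary dictionary between endomorphisms and solvable elements. An algebra endomorphism $\Phi$ of $A$ is completely determined by the pair $(\Phi(p),\Phi(q))$, and since $A$ is the free algebra on $p,q$ modulo the single relation $[p,q]=1$, a pair $(P,Q)$ defines an endomorphism $p\mapsto P,\ q\mapsto Q$ precisely when $[P,Q]=1$. Thus an element $x$ is solvable, with witness $y$ satisfying $[x,y]=1$, if and only if $x=\Phi(p)$ for the endomorphism $\Phi$ determined by $p\mapsto x,\ q\mapsto y$. I will use repeatedly that $p\in\Delta_1$, that each $\Delta_i$ is invariant under $\text{Aut}(A)$, and that by Corollary 3.1 a solvable element is nilpotent, i.e.\ lies in the disjoint union $\Delta_1\cup\Delta_2$.

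The equivalence $(2)\Leftrightarrow(3)$ is then immediate: if every solvable element is strictly nilpotent (lies in $\Delta_1$), then no element of $\Delta_2$ can be solvable since $\Delta_1\cap\Delta_2=\emptyset$; conversely, if every element of $\Delta_2$ is unsolvable, then a solvable $x$, being nilpotent by Corollary 3.1, must avoid $\Delta_2$ and hence lie in $\Delta_1$. For $(1)\Rightarrow(2)$, let $x$ be solvable with $[x,y]=1$; the endomorphism $\Phi:p\mapsto x,\ q\mapsto y$ is an automorphism by $(1)$, and since it carries $p\in\Delta_1$ to $x$ while preserving $\Delta_1$, we conclude $x\in\Delta_1$.

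The substantive direction is $(2)\Rightarrow(1)$. Given an endomorphism $\Phi$, put $P=\Phi(p)$, $Q=\Phi(q)$, so that $[P,Q]=1$ and $P$ is solvable. By $(2)$, $P\in\Delta_1$, so by \cite[Theorem 9.1]{dix} there is $\Psi\in\text{Aut}(A)$ with $\Psi(P)\in\mathbb F[q]$. Replacing $\Phi$ by $\Psi\Phi$ (which leaves unchanged whether $\Phi$ is an automorphism, and keeps $\Phi(p)$ solvable), I may assume $\Phi(p)\in\mathbb F[q]$; by Lemma 3.7 applied with the element $q$, this forces $\Phi(p)=c_0+c_1q$ with $c_1\neq0$. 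A further scaling-translation automorphism $\tau$, with $\tau(p)=c_1p$ and $\tau(q)=c_1^{-1}(q-c_0)$, normalizes this to $\Phi(p)=q$. It remains to analyze $Q'=\Phi(q)$, constrained by $[q,Q']=1$. Since $\mathrm{ad}\,q$ sends each basis element $p^iq^j$ to $-i\,p^{i-1}q^j$, the equation $[q,Q']=1$ forces $Q'=-p+g(q)$ for some $g(X)\in\mathbb F[X]$. The image of $\Phi$ then contains $q$, hence $g(q)$, hence $Q'-g(q)=-p$; thus it contains both $p$ and $q$ and equals $A$, so $\Phi$ is surjective. As $A$ is simple and $\Phi(1)=1\neq0$, $\Phi$ is also injective, hence an automorphism.

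I expect the main obstacle to be precisely this $(2)\Rightarrow(1)$ step: converting the purely qualitative input ``$\Phi(p)$ is strictly nilpotent'' into the rigid normal form $\Phi(p)=q$ via Dixmier's structure theorem and Lemma 3.7, and then verifying that the successive compositions with the automorphisms $\Psi$ and $\tau$ preserve the statement to be proved. Once the normalization $\Phi(p)=q$ is secured, the commutator relation $[q,\Phi(q)]=1$ does the rest almost mechanically, recovering $p$ inside the image; the remaining care is bookkeeping rather than a new idea.
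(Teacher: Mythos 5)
Your proof is correct. The equivalences $(2)\Leftrightarrow(3)$ and $(1)\Rightarrow(2)$ coincide with the paper's argument (Corollary 3.1 plus the dictionary between pairs $[P,Q]=1$ and endomorphisms), but your treatment of the substantive direction $(2)\Rightarrow(1)$ diverges usefully from the paper's. The paper applies hypothesis (2) to $\varphi(q)$, normalizes it to $c_1q+c_0$ via Dixmier's Lemma 8.9(1) and Lemma 3.7, and then must invoke \emph{the proof} of Lemma 3.6 --- a graded induction peeling off components $y_i\in A_i$ --- to deduce that $\Phi\varphi(p)$ has the form $b_{-1}p+b_0+\cdots+b_tq^t$, finishing with an $\exp(\mathrm{ad})$ conjugation and the assertion that the resulting endomorphism is ``easily seen'' to be an automorphism. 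You instead apply (2) to $\Phi(p)$ and push the normalization one step further with the linear automorphism $\tau$, so that $\Phi(p)$ becomes exactly $q$; this extra step collapses the hardest part of the paper's argument to a one-line PBW computation, since $\mathrm{ad}\,q$ sends $p^iq^j$ to $-ip^{i-1}q^j$ and hence $[q,\Phi(q)]=1$ forces $\Phi(q)=-p+g(q)$ by comparing coefficients --- no appeal to the internals of Lemma 3.6 is needed. Your conclusion is also more self-contained: surjectivity because the image contains $q$ and $-p$, injectivity because $A$ is simple and $\Phi(1)=1$, whereas the paper leaves the final verification implicit. The costs are negligible: you cite Dixmier's Theorem 9.1 where the paper cites Corollary 6.7(1) plus Lemma 8.9(1) (both routes are available, and the paper's own Lemma 3.6 uses Theorem 9.1), and you use simplicity of $A$, which is standard and is implicitly required by the paper's last step as well. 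In short, same skeleton, but your normalization trick trades the paper's most technical ingredient for an elementary basis computation.
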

\begin{proof} The equivalence
of (2) and (3) is immediate from Corollary 3.1.\par
$(1)\Rightarrow (2)$:  Let $x\in A$ be solvable, and let $y\in A$ be an element such that $[x, y]=1$.
Then there exists a unique endomorphism $\varphi$ of $A$ such that $\varphi (p)=x$ and $\varphi (q)=y$. Hence $\varphi\in \text{Aut}(A)$ by (1), which shows that  $x$ is strictly nilpotent since $p$ is so. \par
$(2)\Rightarrow (1)$:  Let $\varphi$ be an endomorphism of $A$. Then $\varphi (q)$ is solvable and hence $\varphi (q)\in \Delta_1$ by (2). By \cite[Corollary 6.7 (1)]{dix}, we have $N(\varphi (q))=A$.  Then  \cite[Lemma 8.9 (1)]{dix} says that there is $\Phi\in \text{Aut} (A)$ such that $\Phi \varphi (q)\in \mathbb F[q]$. Hence we have by Lemma 3.7 that $$\Phi\varphi (q)=c_1q+c_0,\quad  c_0, c_1\in\mathbb F, \ c_1\neq 0.$$
 Since \ $[\Phi\varphi (p), \Phi\varphi (q)]=[p,q]=1$,\ the proof of Lemma 3.6 shows that $$\Phi\varphi (p)=b_{-1}p+b_0+\cdots +b_tq^t, \ b_i\in\mathbb F$$ for some $t\in\mathbb N$. Clearly we have $b_{-1}=c_1^{-1}$.\par
 Let $$\Phi'=\text{exp} \{\text{ad}\ b_{-1}^{-1}(b_0q+\cdots +b_t \frac{1}{t+1}q^{t+1})\}\in\text{Aut}(A).$$ Then we have
 $\Phi\varphi (p)=\Phi' (b_{-1}p)$ and hence $\Phi'^{-1}\Phi\varphi (p)=b_{-1}p$. Since $$\Phi'^{-1}=\text{exp} \{-\text{ad}\ b_{-1}^{-1}(b_0q+\cdots +b_t\frac{1}{t+1} q^{t+1})\},$$ we have $$\Phi'^{-1}\Phi\varphi (q)=\Phi\varphi (q)=c_1q+c_0.$$
  It is easily seen that the endomorphism $\Phi'^{-1}\Phi\varphi $ (of $A$) is indeed an automorphism. Therefore,  $\varphi \in \text{Aut}(A)$.
\end{proof}
Let us see what Lemma 3.7 implies. Recall  the  partition: $A-\mathbb F=\cup^5_{i=1}\Delta_i$. Given a polynomial $f(X)\in\mathbb F[X]$ with $\text{deg} f(X)>1$, set $$f(\Delta_i)=\{f(x)|x\in \Delta_i\}\quad\mathrm{for} \quad 1\leq i\leq 5.$$  By Dixmier's problem 6
(proved in \cite[Theorem 1.4]{aj} and \cite[p.4]{bav}), we have $f(\Delta_5)\subseteq \Delta_5$. Therefore,  $f(x)$ is unsolvable for $x\in \Delta_5$.\par
Let $x\in A-\mathbb F$. Then we have $C(x)=C(f(x))$ by \cite[Corollary 4.5]{dix}, whereas \cite[Proposition 10.3]{dix} says that $N(x)=N(f(x))$. Therefore $x$ is nilpotent (resp. strictly nilpotent) if and only if $f(x)$ is nilpotent (resp. strictly nilpotent); that is (see \cite[p.3]{bav}), $$f(x)\in \Delta_i\Leftrightarrow x\in\Delta_i,\quad \text{for}\quad i=1,2.$$
 Thus, Lemma 3.7 tells us that $f(\Delta_2)$ is a subset of $\Delta_2$ consisting of unsolvable elements. In view of Theorem 3.8, this reduces the solution to the Dixmier's open question  to determining the solvability of elements in the set
$$
\Delta_2-\underset{f,\ \mathrm{deg}f(X)>1}{\cup}f(\Delta_2).$$

\section{Reduction theorems}
 In this section we establish two reduction theorems which give sufficient conditions for an element in $A$ being
  unsolvable. \par
First,  we determine   the solvability of \ $x\in A$ \ if\ $v_{\rho,\sigma}(x)< \rho+\sigma$\ for some $(\rho,\sigma)\in\mathcal P$.
In view of $\omega\in \text{Aut}(A)$, it suffices to assume that $\rho\geq \sigma$.\par  By the Division Algorithm, there exists unique integers $l$ and $r$ such that $\rho=l\sigma +r$, and $0\leq r<\sigma$. Then $x$ is of the form $$x=c_{-1}p+c_0+c_1q+\cdots +c_lq^l,\quad c_i\in\mathbb F.$$ If $c_{-1}\neq 0$, then we have $$x=\text{exp}\{-\text{ad}\ c_1^{-1}(c_0q+\cdots +c_l \frac{1}{l+1}q^{l+1})\} (c_{-1}p)\in \Delta_1,$$ and hence $x$ is solvable since $c_{-1}p$ is so; if $c_{-1}=0$, so that $x$ is a polynomial of $q$, then the solvability of $x$ is given by  Lemma 3.7.\par
In the sequel we assume that $v_{\rho, \sigma}(x)\geq \rho+\sigma$ for all $(\rho,\sigma)\in\mathcal P$.
\subsection{The first reduction theorem}
 We give a sufficient condition for an element in $A$ being unsolvable in this subsection. The first lemma says that the majority of non-negatively graded elements are unsolvable.\par
\begin{lemma} Let $x=x_k+x_{k+1}+\cdots+x_n\in A,\ x_i\in A_i$. If $k>1$, then $x$ is unsolvable.
\end{lemma}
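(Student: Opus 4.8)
The plan is to argue by contradiction using the $\mathbb Z$-grading $A=\oplus_i A_i$ together with the fact that $1$ lives in degree $0$. Suppose $x$ is solvable and choose $y\in A$ with $[x,y]=1$; necessarily $y\neq 0$, so I may write $y=y_{-s}+\cdots+y_t$ with $y_{-s}\neq 0$ its lowest nonzero graded component. Since $A_iA_j\subseteq A_{i+j}$ and $x$ starts in degree $k$, the commutator $[x,y]$ is a sum of terms $[x_i,y_j]$ each lying in $A_{i+j}$ with $i\geq k$ and $j\geq -s$; in particular its lowest possible degree is $k-s$, realized only by $[x_k,y_{-s}]$. The whole argument then turns on comparing the sign of $k-s$ against the requirement that $[x,y]=1\in A_0$.

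If $s<k$, then every graded piece of $[x,y]$ sits in degree $\geq k-s>0$, so $[x,y]$ has no degree-$0$ component and cannot equal $1$. If $s>k$, then the lowest piece $[x_k,y_{-s}]$ lies in the negative degree $k-s<0$, and since $1\in A_0$ this piece must vanish; but $x_k\in A_k$ with $k>0$ and $y_{-s}\in A_{-s}$ with $-s<0$ are both nonzero, so Lemma 3.3 is contradicted. This leaves only the balanced case $s=k$.

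When $s=k$, I extract the degree-$0$ component of $[x,y]$: among the terms $[x_i,y_j]$ the only one with $i+j=0$, $i\geq k$ and $j\geq -k$ is $[x_k,y_{-k}]$, so $[x_k,y_{-k}]=1$. Thus the homogeneous element $x_k\in A_k$ is itself solvable. But $x_k$ is a nonzero sum of monomials $p^iq^j$ with $j-i=k$, so $v_{1,1}(x_k)\geq k\geq 2=\rho+\sigma$ for $(\rho,\sigma)=(1,1)\in\mathcal P$, and Lemma 3.5 then forces $x_k$ to be unsolvable---a contradiction. Since all three cases are impossible, $x$ is unsolvable.

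The reduction is essentially bookkeeping once the grading is in place; I expect the only delicate point to be the balanced case $s=k$, where one must check that the degree-$0$ part of $[x,y]$ is exactly $[x_k,y_{-k}]$ (no other pair $(i,j)$ contributes) so that Lemma 3.5 applies to the single homogeneous component $x_k$. The hypothesis $k>1$ enters precisely here, guaranteeing $v_{1,1}(x_k)\geq 2$ and hence the applicability of Lemma 3.5.
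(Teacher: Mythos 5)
Your proof is correct and follows essentially the same route as the paper: both compare the lowest graded component $y_{-s}$ of $y$ against $k$, using Lemma 3.3 to rule out unbalanced negative degrees and Lemma 3.5 (via $v_{1,1}(x_k)\geq k\geq 2$) to rule out the balanced case $s=k$. The only cosmetic difference is that you split off $s<k$ as a direct positivity-of-degrees argument, whereas the paper folds it into the Lemma 3.3 case and treats the nonnegatively graded situation at the end.
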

\begin{proof} Suppose on the contrary that $x$ is solvable. Let $$y=y_{-s}+\cdots +y_0+\cdots +y_t, \quad y_i\in A_i,\ s, t\in \mathbb N$$ be an element such that $[x, y]=1$.\par Choose $\rho=\sigma=1$. Recall
from Section 2 that each element in $A_i$ is of the form $f_i(h)q^i$ if $i\geq 0$ or $f_i(h)p^i$ if $i<0$, so that  its $(\rho,\sigma)$-degree is at least $|i|$ by Lemma 2.1.  \par
It is no loss to assume that $x_k\neq 0$. First, we claim that $y_{-s}=0$ if $s>0$. \par
Suppose that $y_{-s}\neq 0$ and $s>0$.  Write $$[x,y]=[x_k, y_{-s}]+y',\quad  y'\in \sum_{i>k-s} A_i.$$
If $s\neq k$,  then the fact that $[x_k, y_{-s}]\in A_{k-s}$ and
   $1\in A_0$ yields $[x_k, y_{-s}]=0$, contrary to Lemma 3.3. If $s=k$, then since $[x_k,y_{-k}]\in A_0$ and $y'\in \sum_{i>0} A_i$,    we have $[x_k, y_{-k}]=1$, so that $x_k$ is solvable. This contradicts Lemma 3.5 since
$$v_{\rho,\sigma}(x_k)\geq k\geq  \rho+\sigma=2.$$ Then the claim holds.  It follows that
 $$y=y_0+\cdots +y_t,$$  implying that  $[x,y]\in \sum_{i>0} A_i$, contrary to the fact that $1\in A_0$. \par
Therefore,  $x$ is unsolvable.
\end{proof}
By applying the automorphism $\omega$ of $A$,  we obtain that an element of the form $$x=x_{-n}+\cdots +x_{-k-1}+x_{-k}, \quad x_i\in A_i,\ k>1$$ is also unsolvable.\par

  Define the {\it height} of an element $x=\sum x_i, \ x_i\in A_i$ by
\ $\text{ht}(x)=\text{max}\{i|x_i\neq 0\}$.
\begin{theorem} Let $x=x_s+x_{s+1}+\cdots  +x_t\in A$, $x_i\in A_i$, $s,t\in \mathbb Z$. If $x_t\neq 0$, $t>1$, and $C(x_t)=\mathbb F[x_t]$ or, if $x_s\neq 0$, $s<-1$, and  $C(x_{-s})=\mathbb F[x_{-s}]$, then $x$ is unsolvable.
\end{theorem}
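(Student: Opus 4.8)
The plan is to assume $x$ is solvable, fix $y$ with $[x,y]=1$, and run a descent argument on the height $q=\text{ht}(y)$, exploiting the $\mathbb Z$-grading $A=\oplus_i A_i$ together with the hypothesis $C(x_t)=\mathbb F[x_t]$. First I would record the basic grading fact: since $\text{ht}(x)=t$ and $\text{ht}(y)=q$, and $[x,y]=\sum_{i,j}[x_i,y_j]$ with $[x_i,y_j]\in A_{i+j}$, the top graded component of $[x,y]$, living in degree $t+q$, is exactly $[x_t,y_q]$ — the only pair $(i,j)$ with $i\le t$, $j\le q$, $i+j=t+q$ is $(t,q)$. Everything is then read off from this top component and compared against $1\in A_0$.

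Next I would split on whether $[x_t,y_q]$ vanishes. If $[x_t,y_q]\neq 0$, then $\text{ht}([x,y])=t+q$; but $[x,y]=1$ has height $0$, forcing $q=-t$, and the degree-$0$ component reduces to $[x_t,y_{-t}]=1$. This exhibits $x_t\in A_t$ as a solvable element, and since $t>1$ we have $v_{1,1}(x_t)\ge t\ge 2$, so Lemma 3.5 (with $(\rho,\sigma)=(1,1)$) says $x_t$ is unsolvable — a contradiction. If instead $[x_t,y_q]=0$, then $y_q\in C(x_t)=\mathbb F[x_t]$. Here I would use that $x_t$ is homogeneous of degree $t\neq 0$, so $\mathbb F[x_t]=\bigoplus_{k\ge 0}\mathbb F\,x_t^k$ with $x_t^k\in A_{kt}$ nonzero and of distinct degrees (as $A$ is a domain); hence $\mathbb F[x_t]\cap A_q$ is nonzero only when $q=kt$ for some $k\ge 0$, in which case $y_q=c\,x_t^k$. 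In particular $y_q\neq 0$ forces $q\ge 0$, and replacing $y$ by $y-c\,x^k$ (still a solution, since $x^k\in C(x)$, and with its degree-$q$ top cancelled because $\text{ht}(x^k)=kt=q$) strictly lowers the height.

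The descent then closes as follows: for any solution the top bracket $[x_t,y_q]$ is either nonzero, giving the Lemma 3.5 contradiction above, or zero, in which case $q\ge 0$ and the height strictly decreases. If we never land in the first alternative we produce solutions of strictly decreasing nonnegative heights, which is impossible; so the first alternative must occur, again a contradiction. Hence no $y$ with $[x,y]=1$ exists, and $x$ is unsolvable. The symmetric statement follows by applying $\omega$, which sends $A_i$ to $A_{-i}$: if $x_s\neq 0$ with $s<-1$, then $\omega(x)$ has top component $\omega(x_s)\in A_{-s}$ with $-s>1$, and $C(\omega(x_s))=\omega(C(x_s))=\omega(\mathbb F[x_s])=\mathbb F[\omega(x_s)]$, so the condition $C(x_s)=\mathbb F[x_s]$ places $\omega(x)$ in the first case; as $\omega$ preserves solvability, $x$ is unsolvable. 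I read the printed hypothesis $C(x_{-s})=\mathbb F[x_{-s}]$ as a misprint for $C(x_s)=\mathbb F[x_s]$, the condition on the lowest component matching this $\omega$-symmetry.

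I expect the only real care to be in the descent bookkeeping — checking that the reduction $y\mapsto y-c\,x^k$ keeps $[x,\cdot]=1$ while strictly decreasing height, and that the vanishing case $[x_t,y_q]=0$ genuinely forces $q\ge 0$, so the descent stays in $\mathbb N$ and terminates. The conceptual crux, which is where the new hypothesis enters, is the clean dichotomy on $[x_t,y_q]$: the nonzero case is exactly where $t>1$ and Lemma 3.5 bite, while the zero case is where $C(x_t)=\mathbb F[x_t]$ together with homogeneity rigidly pins down $y_q$.
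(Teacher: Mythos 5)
Your proof is correct, and its core --- using $C(x_t)=\mathbb F[x_t]$ together with homogeneity to force the top component of any solution $y$ to be $c\,x_t^k$, so that $\mathrm{ht}(y)=kt\ge 0$, then subtracting $c\,x^k$ and descending on height --- is the same descent as the paper's. Where you genuinely differ is the endgame. The paper first invokes the $\omega$-reflected consequence of Lemma 4.1 (a solvable element has height $\ge -1$) to guarantee $t+\mathrm{ht}(y)\ge 1$, so that the top bracket $[x_t,y_{t'}]$ must vanish; its terminal contradiction (reaching a solution of height $\le 0$) again tacitly leans on the same fact. You avoid Lemma 4.1 entirely: your dichotomy on $[x_t,y_q]$ handles the nonvanishing case directly, forcing $q=-t$ and $[x_t,y_{-t}]=1$, so that $x_t$ itself is solvable, contradicting Lemma 3.5 since $v_{1,1}(x_t)\ge t\ge 2$. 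This buys a self-contained argument (only Lemma 3.5, the grading, and the fact that $A$ is a domain are needed) and a cleaner termination: every branch-2 step has nonnegative height, so the strictly decreasing sequence of heights must eventually land in branch 1, which is itself a contradiction. The paper's route instead reuses an already-proved statement, at the cost of an endgame whose ``a contradiction'' steps require re-running the top-bracket analysis to unpack. Finally, your reading of the printed hypothesis $C(x_{-s})=\mathbb F[x_{-s}]$ as a misprint for the condition on the lowest component $x_s$ agrees with the paper's own proof, which reduces the second case to the first via $\omega$ exactly as you do.
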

\begin{proof} In view of the automorphism $\omega$ of $A$, it suffices to prove the theorem under the assumptions $x_t\neq 0$, $t>1$, and $C(x_t)=\mathbb F[x_t]$.\par Suppose on the contrary that $x$ is solvable. Then there is $$y=y_{s'}+y_{s'+1}+\cdots+y_{t'}\in A, \quad y_j\in A_j,\ s',t'\in\mathbb Z$$ such that $[x, y]=1$. Now that $y$
is solvable,  we have  $\text{ht}(y)=t'\geq -1$ by the conclusion following Lemma 4.1.\par
 Write $$[x,y]=[x_t, y_{t'}]+u, \quad u\in \sum_{i<t+t'}A_i. $$
 Since $t+t'\geq 1$ and $1\in A_0$,     we obtain $[x_t, y_{t'}]=0$ and hence
$y_{t'}\in C(x_t)=\mathbb F[x_t]$. Let $f(X)\in \mathbb F[X]$ be the polynomial such that $y_{t'}=f(x_t)$. Since $x_t^k\in A_{tk}$ for all $k\in \mathbb N$, we have $$f(X)=cX^n\quad \mathrm{for\ some}\quad c\in \mathbb F\setminus 0\ \ \mathrm{and}\ \ n\in\mathbb N$$  and hence $y_{t'}=cx_t^n$, implying that $\text{ht}(y)=t'=nt\geq 0$.\par
If $t'=0$, so that $n=0$ since $t>1$, then $y_0=c\in\mathbb F$. It follows that  $$[x, y_{s'}+y_{s'+1}+\cdots +y_{-1}]=[x, y-y_0]=1,$$ a contradiction.\par
If $t'>0$, since
   $$\begin{aligned} (y-f(x))_{t'}&=y_{t'}-f(x)_{t'}\\ &=y_{t'}-cx^n_t\\ &=0,\end{aligned}$$ we have $\text{ht}(y-f(x))<t'=\text{ht}(y)$.  Since $[x, y-f(x)]=1$, the above discussion applies to $y-f(x)$ in place of  $y$ as long as $\text{ht} (y-f(x))>0$.  Continue  this process.  Then we will obtain an element $y'$ such that $[x,y']=1$ and $\text{ht} (y')\leq 0$, a contradiction.   \par  Thus,  $x$ must be unsolvable.
\end{proof}

\subsection{ The second reduction theorem}

In this subsection we establish the second sufficient condition for $x$ being unsolvable.

 \begin{lemma}(\cite[Proposition 7.4]{dix}) Let $\rho,\sigma$ be positive integers such that $\rho\nmid\sigma$ and
  $\sigma\nmid\rho$. Let $x\in A$ with $v_{\rho,\sigma}(x)>\rho +\sigma$. If the $(\rho,\sigma)$-polynomial of $x$ is not a monomial, then  $x\in \Delta_5$.
 \end{lemma}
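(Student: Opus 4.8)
The plan is to show that $D(x)=C(x)$ and $N(x)=C(x)$; since $C(x)\subseteq D(x)$ and $C(x)\subseteq N(x)$ always hold and $x\in A-\mathbb F$ (as $v_{\rho,\sigma}(x)>\rho+\sigma>0$), this places $x$ in $\Delta_5$. Both equalities are to be read off from the behaviour of $\mathrm{ad}\,x$ on the associated graded of the $(\rho,\sigma)$-filtration, which is the commutative polynomial ring $\mathbb F[X,Y]$ carrying the Poisson bracket with $\{X,Y\}=1$ (the symbol of $[p,q]=1$); I identify $p,q$ with $X,Y$ and write $f$ for the $(\rho,\sigma)$-polynomial of $x$, which is $(\rho,\sigma)$-homogeneous of degree $v_{\rho,\sigma}(x)>\rho+\sigma$ and, by hypothesis, not a monomial. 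The computational input I rely on is that the top $(\rho,\sigma)$-term of $[x,w]$ has symbol $\{f,g\}$, where $g$ is the $(\rho,\sigma)$-polynomial of $w$; by Lemma 3.2 this term vanishes exactly when $\{f,g\}=0$, and otherwise $v_{\rho,\sigma}([x,w])=v_{\rho,\sigma}(x)+v_{\rho,\sigma}(w)-(\rho+\sigma)>v_{\rho,\sigma}(w)$, using $v_{\rho,\sigma}(x)>\rho+\sigma$. Thus $\mathrm{ad}\,x$ strictly raises the $(\rho,\sigma)$-degree unless the leading symbol is Poisson-central for $f$.

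First I would pin down the homogeneous Poisson centralizer of $f$. By the equivalence in Lemma 3.2, $\{f,g\}=0$ for homogeneous $g$ forces $g^{\,v_{\rho,\sigma}(x)}=c\,f^{\,\deg g}$, so by unique factorization over $\bar{\mathbb F}$ both $f$ and $g$ are scalar powers of a common primitive $(\rho,\sigma)$-homogeneous polynomial $\psi$; writing $f=\alpha\psi^{e}$, the homogeneous Poisson centralizer of $f$ is exactly $\{\beta\psi^{m}\}$. Because $f$ is not a monomial, neither is $\psi$. (The relative primality conditions $\rho\nmid\sigma$, $\sigma\nmid\rho$ enter decisively only in the final step below.)

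For $D(x)=C(x)$, suppose $[x,z]=\lambda z$ with $z\neq 0$ and $\lambda\in\bar{\mathbb F}$. If $\lambda\neq 0$ then $v_{\rho,\sigma}([x,z])=v_{\rho,\sigma}(z)$, which is incompatible with the strict increase above; hence the leading symbol of $[x,z]$ vanishes, so the $(\rho,\sigma)$-polynomial of $z$ is a power $\gamma\psi^{m}$. I then pass to the next $(\rho,\sigma)$-layer: matching the degree-$v_{\rho,\sigma}(z)$ component of $[x,z]=\lambda z$, after subtracting the homogeneous corrector that cancels the leading term, produces an equation of the shape $\{f,g_{1}\}=\mu\,\psi^{\ell}$ with $\mu=\lambda\cdot(\text{nonzero})$. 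For $N(x)=C(x)$ I argue in the same spirit: a nontrivial generalized eigenvector for eigenvalue $0$ gives $z$ with $[x,z]\neq 0$ but $[x,[x,z]]=0$, so $[x,z]\in C(x)$ has leading symbol a power of $\psi$, and the same layer-by-layer bookkeeping again reduces the existence of $z$ to solving $\{f,g_{1}\}=\mu\psi^{\ell}$ with $\mu\neq 0$.

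Everything therefore comes down to the key non-coboundary fact: for $f=\alpha\psi^{e}$ non-monomial with $\rho\nmid\sigma$, $\sigma\nmid\rho$, no nonzero power $\psi^{\ell}$ lies in the image of $\{f,\,\cdot\,\}$ on $\mathbb F[X,Y]$; equivalently $\{f,g\}\in\mathbb F[\psi]\Rightarrow\{f,g\}=0$. I expect this to be the main obstacle, and I would prove it by the weighted-homogeneous analogue of an averaging argument: the Hamiltonian flow of $f$ preserves each level $\{\psi=\text{const}\}$, along which $\psi^{\ell}$ is constant, so $\{f,g\}=\mu\psi^{\ell}$ would force the polynomial $g$ to grow linearly along the flow, which is impossible. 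Concretely, one checks on each pair of homogeneous components that the image of $\{f,\,\cdot\,\}$ misses the line $\mathbb F\psi^{\ell}$, the relevant coefficient obstruction being nonzero precisely because $\rho\neq\sigma$ and neither divides the other; the equal-weight or divisible-weight cases are exactly where this obstruction degenerates and genuine eigenvectors appear, as for $h=pq$. The remaining routine point is the bookkeeping when $e\geq 2$, where the leading symbol of $z$ need not be a power of $x$; there I would choose a counterexample of minimal $(\rho,\sigma)$-degree, so that all correction terms live in strictly lower layers and the same non-coboundary fact closes the argument.
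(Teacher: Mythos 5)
First, a point of comparison: the paper does not prove this lemma at all --- it is imported verbatim from Dixmier (\cite[Proposition 7.4]{dix}) --- so your attempt has to stand entirely on its own merits.

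Your skeleton (show $D(x)=C(x)$ and $N(x)=C(x)$; use the fact that $\mathrm{ad}\,x$ raises $(\rho,\sigma)$-degree by $v_{\rho,\sigma}(x)-(\rho+\sigma)>0$ unless the top symbol Poisson-commutes with $f$; identify the homogeneous Poisson centralizer of $f=\alpha\psi^{e}$ with the powers of $\psi$) is the right kind of argument. The genuine gap sits exactly at what you yourself call ``the main obstacle'': the claim that no nonzero multiple of $\psi^{\ell}$ lies in the image of $\{f,\cdot\}$. You do not prove it; you offer a flow heuristic (``$g$ would grow linearly along the flow, which is impossible'') and an unexhibited ``coefficient obstruction''. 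This cannot be waved through, because the claim is \emph{false} without the divisibility hypotheses, while nothing in your justification actually invokes them. Concretely, take $(\rho,\sigma)=(1,2)$ and $x=(q+p^{2})^{2}=p^{4}+2p^{2}q+q^{2}-2p$. Then $v_{1,2}(x)=4>\rho+\sigma$, and the $(1,2)$-polynomial of $x$ is $f=(X^{2}+Y)^{2}=\psi^{2}$ with $\psi=X^{2}+Y$ primitive and non-monomial; yet $\{\psi,X\}=-1$, hence $\{f,-X/2\}=\psi$, and correspondingly $[x,-p/2]=q+p^{2}$, so $q+p^2\in N(x)\setminus C(x)$. Indeed $x=\Phi(q)^{2}\in\Delta_{1}$ for $\Phi=\exp(\mathrm{ad}\,p^{3}/3)$, so $x\notin\Delta_{5}$. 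Note how your flow heuristic misfires here: the level sets of $X^{2}+Y$ are parabolas, i.e.\ affine lines with a complete non-periodic flow, along which a polynomial \emph{can} grow linearly. Linear growth is obstructed only when the generic level curve of $\psi$ has periods (nontrivial topology), and that is precisely what the hypotheses $\rho\nmid\sigma$, $\sigma\nmid\rho$ secure: writing $\rho=d\rho'$, $\sigma=d\sigma'$ with $(\rho',\sigma')$ coprime, they force $\rho',\sigma'\geq 2$, so the non-monomial factors $Y^{\rho'}-\lambda X^{\sigma'}$ define curves such as $Y^{2}=X^{3}+c$ rather than graphs. Your sketch never isolates this mechanism, and it is the entire content of the lemma.

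There is a second, lesser gap in the ``layer-by-layer bookkeeping''. In the eigenvector case $[x,z]=\lambda z$, $\lambda\neq 0$, you cannot ``subtract the homogeneous corrector that cancels the leading term'': the corrector is not itself an eigenvector, so the equation for $z'=z-cx^{k}$ becomes the inhomogeneous $[x,z']=\lambda z'+\lambda cx^{k}$; moreover when $e\geq 2$ the needed corrector $x^{m/e}$ need not exist at all. Also, the component of $[x,z]$ in degree $v_{\rho,\sigma}(z)$ receives contributions not only from $\{f,g_{1}\}$ but also from brackets of lower homogeneous components of $x$ against the top of $z$, and from the lower-order remainder $u$ of Lemma 3.2 applied to the top terms --- none of which your target equation $\{f,g_{1}\}=\mu\psi^{\ell}$ accounts for. (A further small point: the paper's Lemma 3.2 as quoted gives only the degree of the leading term and the equivalence with $g^{v_{\rho,\sigma}(x)}=cf^{v_{\rho,\sigma}(y)}$; the identification of its symbol with the Poisson bracket $\{f,g\}$ is extra input from Dixmier's Lemma 2.7.) So even granting the key fact, the reduction to it is not established; both gaps are precisely where Dixmier's Section 7 does its work.
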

 By the lemma, if an element $x\in A$ is solvable, so that $x\notin \Delta_5$,  and if $x$ has an edge $E_{\rho,\sigma}(x)$ with $(\rho,\sigma)\in\mathcal P$, then the assumption $v_{\rho,\sigma}(x)>\rho +\sigma$ yields either $\rho|\sigma$ or $\sigma|\rho$; that is, either $\rho=1$ or $\sigma=1$.\par
   In view of the automorphism $\omega$ of $A$, we assume that $\rho=n\geq 1$ and $\sigma=1$ in the remainder of this subsection.\par
\begin{lemma} Let $f, g\in \mathbb F[X, Y]$ be two $(\rho,\sigma)$-homogeneous
polynomials with $(\rho,\sigma)$-degrees respectively  $w$ and $v$. Assume that $f$ is not equal to $f_1^m$ ($m>1$) for any polynomial $f_1\in \mathbb F[X,Y]$.  If $f^v=g^w$, then $w|v$ and hence $g=f^{v/w}$.
\end{lemma}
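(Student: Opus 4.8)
The plan is to argue inside the unique factorization domain $\mathbb{F}[X,Y]$, graded by giving $X$ the degree $\rho$ and $Y$ the degree $\sigma$, so that the $(\rho,\sigma)$-homogeneous polynomials are exactly the homogeneous elements for this grading. Since $\rho,\sigma>0$, I first record that every nonzero homogeneous element factors into homogeneous irreducibles: writing a homogeneous element as a product and comparing its lowest- and highest-degree parts (using that $\mathbb{F}[X,Y]$ is an integral domain) forces each factor to be homogeneous. Hence I may fix a set of distinct homogeneous irreducibles, normalized in some fixed way (say, made monic in a chosen monomial order), and write $f=c\prod_i p_i^{a_i}$ and $g=d\prod_i p_i^{b_i}$ with $c,d\in\mathbb{F}^{\times}$ and $a_i,b_i\geq 0$.

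I then substitute these into $f^{v}=g^{w}$. Since a product of normalized irreducibles is again normalized, unique factorization yields simultaneously the scalar identity $c^{v}=d^{w}$ and the exponent identities $a_i v=b_i w$ for every $i$. In particular the $p_i$ dividing $f$ are exactly those dividing $g$, and the ratio $a_i/b_i=w/v$ is independent of $i$. Writing $w/v=P/Q$ in lowest terms, the relation $Q\,a_i=P\,b_i$ together with $\gcd(P,Q)=1$ gives $P\mid a_i$ for all $i$, so that $\prod_i p_i^{a_i}=\big(\prod_i p_i^{a_i/P}\big)^{P}$ is a $P$-th power of a polynomial.

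The decisive step is to show $P=1$, which is equivalent to $w\mid v$. The idea is that if $P>1$ then $f$ itself should be a proper $P$-th power, contradicting the hypothesis; the only thing standing in the way is the leading scalar, since the polynomial part $\prod_i p_i^{a_i}$ is already a $P$-th power but I must also exhibit a $P$-th root of $c$ lying in $\mathbb{F}$. I expect this scalar bookkeeping to be the main obstacle: I would use the relation $c^{v}=d^{w}$, together with a B\'ezout combination $\alpha v+\beta w=\gcd(v,w)$, to manufacture such a root honestly inside $\mathbb{F}$ — this is exactly the point at which one must use that $\mathbb{F}$ is algebraically closed, or at least contains the requisite roots, and where the non-proper-power assumption is genuinely consumed. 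Granting $P=1$, the exponent identities become $b_i=(v/w)a_i$, whence $g$ and $f^{v/w}$ have identical irreducible parts and therefore agree up to the scalar $d\,c^{-v/w}$; the identity $c^{v}=d^{w}$ pins this scalar down (again modulo the same root-of-unity check) and yields $g=f^{v/w}$.
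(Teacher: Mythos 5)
Your proposal follows the same overall plan as the paper's proof --- factor both sides of $f^v=g^w$, match irreducible factors to get exponent identities, and finish with a gcd argument --- but the implementation is genuinely different. The paper exploits the standing assumption $\rho=n$, $\sigma=1$ of this subsection to dehomogenize: it writes $f=cX^{w_1}(Y^n/X-\lambda_1)^{s_1}\cdots(Y^n/X-\lambda_k)^{s_k}$ (similarly $g$), i.e.\ it splits a one-variable polynomial in $Y^n/X$ into linear factors over $\bar{\mathbb F}$, matches roots to get $s_iv=r_iw$, claims $\gcd(s_1,\dots,s_k)=1$ from the no-proper-power hypothesis, and concludes $w\mid v$. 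You instead stay inside the UFD $\mathbb F[X,Y]$ and use homogeneous irreducibles over $\mathbb F$ itself, with the scalars $c,d$ tracked explicitly. Your version is tighter on one point: you keep \emph{all} irreducible factors in the gcd bookkeeping, whereas the paper's factor $X^{w_1}$ sits outside its product and its exponent is silently omitted. (For $f=X(Y-X)^2(Y-2X)^2$ one has $\gcd(s_1,s_2)=2$ although $f$ is not a proper power, so the paper's stated implication is false as written even over $\mathbb C$; your bookkeeping avoids this.)

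The step you flag as the obstacle --- producing a $P$-th root of the leading scalar $c$ inside $\mathbb F$ --- is not a formality you merely failed to write out: it is a genuine hole, and it cannot be filled, because the lemma as stated is false over a general field of characteristic zero (which is all the paper assumes). Take $\mathbb F=\mathbb Q$, $\rho=\sigma=1$ (allowed, $n=1$), $f=-X^2Y^2$ ($w=4$), $g=XY$ ($v=2$): then $f^v=X^4Y^4=g^w$, and $f$ is not a proper power in $\mathbb Q[X,Y]$ since that would force a rational square root of $-1$, yet $4\nmid 2$. Your B\'ezout manipulation can only ever produce $c=\zeta e^P$ with $\zeta\in\mathbb F$ a root of unity, and this $\zeta$ (here $-1$) is exactly the irremovable obstruction; the lemma needs $\mathbb F$ algebraically closed, or the hypothesis taken over $\bar{\mathbb F}$. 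The same scalar issue infects the final claim: even over $\mathbb C$ one only gets $g=\zeta f^{v/w}$ with $\zeta^w=1$ (take $f=X^2+Y^2$, $g=-f$), and, tellingly, when the paper invokes this lemma in Theorem 4.5 it cites it in the corrected form $g=c_lf^l$ with an undetermined nonzero scalar. So: your proof is complete over $\bar{\mathbb F}$ (where the root extraction is immediate and B\'ezout is unnecessary), your instinct about where the real difficulty sits is exactly right, and that difficulty is one the paper's own proof steps over without comment --- its assertion that the hypothesis yields $(s_1,\dots,s_k)=1$ already fails for $f=-X^2Y^2$, where $s_1=2$.
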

\begin{proof} Since $\rho=n$ and $\sigma=1$,  the $(\rho,\sigma)$-degree of a $(\rho, \sigma)$-homogeneous polynomial is divisible by $n$. Assume that $w=w_1n$ and $v=v_1n$ for some integers $v_1$ and  $w_1$. Then we get $$\begin{aligned} f&=cX^{w_1}(Y^n/X\ -\lambda_1)^{s_1}\cdots (Y^n/X\ -\lambda_k)^{s_k}, \\ g&=c'X^{v_1}(Y^n/X\ -\mu_1)^{r_1}\cdots (Y^n/X\ -\mu_l)^{r_l}\end{aligned}$$ with $s_1, \dots, s_k$,
$r_1,\dots r_l\in \mathbb N\setminus 0$ and $c, c'\in \mathbb F\setminus 0$, where $\lambda_1, \dots, \lambda_k$ (resp. $\mu_1,\dots, \mu_l$) are  distinct numbers in $\bar{\mathbb F}$.\par Since $f^v=g^w$,\ we have $k=l$ and, by an rearrangement of indices,  $\lambda_i=\mu_i$ for all $i$. It follows that  $s_iv=r_iw$ for $1\leq i\leq k$.\par Since  $f$ is not equal to $f_1^m$ ($m>1$) for any polynomial $f_1$, we get $(s_1, \dots, s_k)=1$. Then since $w$ divides $s_iv$ for all $i$, \ $w$ divides $$(s_1v,\dots,s_kv)=(s_1,\dots, s_k)v=v,$$ implying that $g=f^{v/w}$, as desired.
\end{proof}

\begin{theorem}Let $x\in A$ be an element such that $v_{\rho, \sigma}(x)\geq \rho+\sigma=n+1$, and  let $f$ be its $(\rho,\sigma)$-polynomial. If $f$ is not equal to $f_1^m$ ($m>1$) for any $f_1\in\mathbb F[X,Y]$,  then $x$ is unsolvable.
\end{theorem}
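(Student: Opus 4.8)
The plan is to argue by contradiction and set up an infinite descent on the $(\rho,\sigma)$-degree of a hypothetical partner for $x$. Suppose $x$ is solvable and choose $y\in A$ with $[x,y]=1$. Since $[x,y]=1\neq 0$ we have $y\notin\mathbb F$, and because $\rho,\sigma$ are positive integers this forces $v_{\rho,\sigma}(y)\geq 1$. Writing $[x,y]=\t+u$ as in Lemma 3.2, the hypothesis $v_{\rho,\sigma}(x)\geq\rho+\sigma$ lets me invoke Lemma 3.4 to conclude $\t=0$. Then the equivalence $(1)\Leftrightarrow(2)$ of Lemma 3.2 gives
$$g^{v_{\rho,\sigma}(x)}=c\,f^{v_{\rho,\sigma}(y)}$$
for some nonzero scalar $c$, where $g$ denotes the $(\rho,\sigma)$-polynomial of $y$.

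Next I would feed this into Lemma 4.4. Put $W=v_{\rho,\sigma}(x)$ and $V=v_{\rho,\sigma}(y)$, which are the $(\rho,\sigma)$-degrees of $f$ and $g$ respectively. To remove the scalar I replace $g$ by $\t g=\mu g$, where $\mu\in\bar{\mathbb F}$ is a $W$-th root of $c^{-1}$, so that $f^V=\t g^W$. Since $f$ is not a proper power by hypothesis, Lemma 4.4 (with its $f,g,w,v$ taken to be $f,\t g,W,V$) yields $W\mid V$ and $\t g=f^{V/W}$; hence $g=c'f^k$ with $k=V/W\in\mathbb N\setminus 0$. Comparing a single matching coefficient in the identity $g=c'f^k$ shows $c'\in\mathbb F$, so $c'x^k$ is a genuine element of $A$.

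The decisive reduction is then to replace $y$ by $y'=y-c'x^k$. Because $[x,x^k]=0$, I still have $[x,y']=1$. By Lemma 2.1 the $(\rho,\sigma)$-polynomial of $c'x^k$ is $c'f^k=g$ and $v_{\rho,\sigma}(c'x^k)=kW=V$; since the $(\rho,\sigma)$-term of an element is recovered from its $(\rho,\sigma)$-polynomial via $X^iY^j\mapsto p^iq^j$, the $(\rho,\sigma)$-terms of $y$ and of $c'x^k$ coincide and cancel in the difference, so $v_{\rho,\sigma}(y')<V$. As $[x,y']=1\neq0$ we again have $y'\notin\mathbb F$, hence $v_{\rho,\sigma}(y')\geq 1$, and the entire argument applies verbatim to $y'$ (Lemma 3.4 and Lemma 3.2 re-apply because $v_{\rho,\sigma}(x)\geq\rho+\sigma$ and $f$ is still not a proper power). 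Iterating produces a strictly decreasing sequence of positive integers $v_{\rho,\sigma}(y)>v_{\rho,\sigma}(y')>\cdots\geq 1$, which is impossible. This contradiction shows $x$ is unsolvable.

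The step I expect to demand the most care is the passage from the polynomial identity $g=c'f^k$ to the honest cancellation of leading parts in $A$: one must be certain that $y$ and $c'x^k$ have the \emph{same} $(\rho,\sigma)$-term as elements, so that $v_{\rho,\sigma}(y-c'x^k)$ genuinely drops. This relies on the bijective correspondence $p^iq^j\leftrightarrow X^iY^j$ restricted to the top $(\rho,\sigma)$-degree together with Lemma 2.1, and it is precisely what makes the descent well-defined; the hypothesis that $f$ is not a proper power enters only through Lemma 4.4, which is exactly what guarantees that $g$ is a scalar multiple of a power of $f$ rather than of some unrelated homogeneous polynomial. Verifying $c'\in\mathbb F$ is a minor but necessary bookkeeping point.
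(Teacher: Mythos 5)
Your proposal is correct and is essentially the paper's own argument: Lemma 3.4 forces the leading term of $[x,y]$ in Lemma 3.2 to vanish, Lemma 3.2 then yields $g^{v_{\rho,\sigma}(x)}=c\,f^{v_{\rho,\sigma}(y)}$, Lemma 4.4 converts this into $g=c'f^{k}$ with $k=v_{\rho,\sigma}(y)/v_{\rho,\sigma}(x)\in\mathbb N\setminus 0$, and subtracting $c'x^{k}$ from $y$ drives the same descent on $v_{\rho,\sigma}(y)$. The only differences are cosmetic and in your favor: you handle the nonzero scalar $c$ explicitly before invoking Lemma 4.4 (the paper absorbs it silently), and you close the descent by well-ordering of the strictly decreasing positive integer degrees, whereas the paper phrases it as eventually reaching a scalar $y-h(x)$, which contradicts $[x,y-h(x)]=[x,y]=1$.
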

\begin{proof}Suppose on the contrary that $x$ is solvable. Let $y\in A$ be an element such that $[x,y]=1$, and  let $g$ be its $(\rho,\sigma)$-polynomial.  \par Write $[x,y]=\t+u$ as in  Lemma 3.2. Then we have $\t=0$ by Lemma 3.4. Hence, Lemma 3.2 says that  $$g^{v_{\rho,\sigma}(x)}=cf^{v_{\rho,\sigma}(y)} \quad \text{for \ some}\ c\neq 0,$$ so that  $g=c_lf^l$ ($c_l\neq 0$) by Lemma 4.4,  where $l=v_{\rho,\sigma}(y)/v_{\rho,\sigma}(x)\in \mathbb N\setminus 0$.\par  Note that $$[x, y-c_lx^l]=[x,y]=1.$$ But $v_{\rho,\sigma}(y-c_lx^l)<v_{\rho,\sigma}(y)$, since $v_{\rho,\sigma}(y)-v_{\rho,\sigma}(c_lx^l)=g-c_lf^l=0$. \par Apply the above discussion to $y-c_lx^l$ in place of $y$,
if $y-c_lx^l$ is not a scalar. Continue the process.
 Since $\rho>0$ and $\sigma>0$, so that the $(\rho, \sigma)$-degree of every element in $A-\mathbb F$ is positive, ultimately we will obtain a polynomial $h(X)\in \mathbb F[X]$ such that $y-h(x)$ is a scalar. It follows that $[x, y-h(x)]=0$, contrary to the fact that
$[x,y]=[x,y-h(x)]$.\par Therefore, $x$ must be unsolvable.
\end{proof}

\section{A geometric characterization of solvable elements}
In this section we give a necessary condition for an element $x\in A$ being solvable and having more than one edges.
 \begin{lemma} Let \ $i, j, i_0,j_0, a, b$ be real numbers such that $a<b$. If \ $ia+j\leq i_0a+j_0$ \ and \ $ib+j<i_0b+j$, then \ $iX+j<i_0X+j_0$\ for all \ $X\in (a,b)$.
\end{lemma}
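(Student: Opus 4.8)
The plan is to reduce everything to the single affine function $g(X) = (i_0 - i)X + (j_0 - j)$, which measures the gap between the two lines, and to rewrite the desired conclusion $iX + j < i_0 X + j_0$ as $g(X) > 0$. In this language the two hypotheses become $g(a) \ge 0$ and $g(b) > 0$; here I read the second displayed inequality as $ib + j < i_0 b + j_0$, the $j$ on its right-hand side being a misprint for $j_0$. So the task becomes the clean statement: an affine function that is nonnegative at $a$ and strictly positive at $b$ is strictly positive on the entire open interval $(a,b)$.

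First I would observe that every point of the open interval is an \emph{interior} convex combination of the two endpoints: for $X \in (a,b)$ set $t = (X-a)/(b-a)$, so that $X = (1-t)a + tb$ with $t \in (0,1)$. This is exactly where the hypothesis $a < b$ enters, guaranteeing $b - a > 0$ and hence that $t$ is well defined and lies strictly between $0$ and $1$.

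Next, because $g$ is affine it respects convex combinations exactly, giving the identity $g(X) = (1-t)\,g(a) + t\,g(b)$. This identity is the heart of the argument and makes any appeal to calculus or to a monotonicity case analysis unnecessary. Substituting back the two endpoint hypotheses then finishes the proof: since $1 - t > 0$ and $g(a) \ge 0$ the first summand is nonnegative, while $t > 0$ and $g(b) > 0$ force the second summand to be strictly positive, so $g(X) > 0$, which is precisely $iX + j < i_0 X + j_0$.

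The only point demanding attention, the place I would flag as the main (if modest) obstacle, is bookkeeping about where the strictness comes from. The endpoint $a$ supplies only a weak inequality, so the strict conclusion on the open interval must be carried entirely by the strictly positive term $t\,g(b)$. That is why it is essential that $t > 0$, equivalently that $X \neq a$, which holds throughout $(a,b)$ but would fail at the closed endpoint $a$ itself; indeed the conclusion is genuinely only a strict inequality on the \emph{open} interval.
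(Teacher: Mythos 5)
Your proof is correct and takes essentially the same approach as the paper's: both reduce the claim to the sign of a single affine difference function at the endpoints (your $g$ is just $-f$ for the paper's $f(X)=(i-i_0)X+(j-j_0)$), and both read the second hypothesis as the evident misprint for $ib+j<i_0b+j_0$. The only distinction is that your convex-combination identity $g(X)=(1-t)g(a)+t\,g(b)$ supplies the rigorous justification for what the paper asserts geometrically (that the segment between $a$ and $b$ lies strictly below the axis), so it is a slightly more careful rendering of the same argument rather than a different one.
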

\begin{proof} Define a function $$f(X)=(i-i_0)X+(j-j_0), \quad X\in \mathbb R.$$ Then the graph of $Y=f(X)$ is a line,
 on which the segment with $X$-coordinates between $a$ and $b$ lies below the $X$-axis,  since
$f(a)\leq 0$ and $f(b)<0$ by assumption.  Thus, we have $f(X)<0$ and hence  $iX+j<i_0X+j_0$ \ for all $X\in (a,b)$.  .
\end{proof}
Let
$x=\sum\a_{ij}p^iq^j\in A$, and let $E_{\rho_1,\sigma_1}(x)$ and $E_{\rho_2,\sigma_2}(x)$,\ $(\rho_1,\sigma_1), (\rho_2,\sigma_2)\in \mathcal P$, \ be two distinct
 edges of $x$.
  Then we have the following lemma.
 \begin{lemma} Then intersection $E_{\rho_1,\sigma_1}(x)\cap E_{\rho_2,\sigma_2}(x)$ contains at most one pair of integers.
 \end{lemma}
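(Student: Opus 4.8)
The plan is to realise each edge as a subset of an affine line and then observe that the two lines carrying distinct edges cannot be parallel, so they meet in at most one point. Concretely, by the definition of $E_{\rho,\sigma}(x)$, every point $(i,j)\in E_{\rho_k,\sigma_k}(x)$ satisfies $i\rho_k+j\sigma_k=v_{\rho_k,\sigma_k}(x)$, so $E_{\rho_k,\sigma_k}(x)$ is contained in the line $L_k=\{(i,j): i\rho_k+j\sigma_k=v_{\rho_k,\sigma_k}(x)\}$ for $k=1,2$. Hence the intersection of the two edges is contained in $L_1\cap L_2$, and it suffices to prove that $L_1$ and $L_2$ meet in at most one point.

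First I would record that the two weight vectors are not proportional. Because the edges are assumed distinct, the scaling remark following Lemma 2.1 (which gives $E_{k\rho,k\sigma}(x)=E_{\rho,\sigma}(x)$, and hence $\rho_1/\sigma_1\neq\rho_2/\sigma_2$ whenever $E_{\rho_1,\sigma_1}(x)\neq E_{\rho_2,\sigma_2}(x)$) yields $\rho_1\sigma_2-\rho_2\sigma_1\neq 0$. Thus $L_1$ and $L_2$ have non-proportional normal vectors $(\rho_1,\sigma_1)$ and $(\rho_2,\sigma_2)$, so they are distinct, non-parallel lines, and $L_1\cap L_2$ is at most a single point. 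Since every point of $E(x)$ is a pair of integers, that intersection point, if it exists, is automatically a pair of integers, and the lemma follows. A variant that dovetails with Lemma 5.1 is to argue by contradiction: if $(i,j)$ and $(i_0,j_0)$ were two distinct common points, then, writing $a=\rho_1/\sigma_1$ and $b=\rho_2/\sigma_2$ and dividing the defining equations by $\sigma_1$ and $\sigma_2$, membership in the two edges says that the affine functions $iX+j$ and $i_0X+j_0$ agree both at $X=a$ and at $X=b$; since $a\neq b$, two affine functions agreeing at two distinct points must coincide, forcing $(i,j)=(i_0,j_0)$, a contradiction.

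There is no serious obstacle here; the only point needing care is the passage from ``distinct edges'' to ``non-proportional weight vectors,'' which is precisely the scaling invariance recorded after Lemma 2.1. I expect Lemma 5.1 to be aimed not at this statement but at the finer comparisons in the main theorem of Section~5, where a non-strict inequality at one endpoint of an interval $(a,b)$ and a strict inequality at the other must be propagated to strict dominance throughout the interval; for Lemma 5.2 itself the two-line argument above is already decisive.
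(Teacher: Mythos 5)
Your proof is correct and takes essentially the same approach as the paper: both reduce the statement to the fact, recorded after Lemma 2.1, that distinct edges force $\rho_1/\sigma_1\neq\rho_2/\sigma_2$, and then conclude by elementary $2\times 2$ linear algebra (the paper phrases this as the difference of two common points solving a nonsingular homogeneous system, you phrase it as two non-parallel lines meeting in at most one point). The two formulations are interchangeable, so nothing further is needed.
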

 \begin{proof} Recall from Section 2 that $\rho_1/\sigma_1\neq \rho_2/\sigma_2$. If there are two pair of integers $(i_0, j_0), (i, j)$ contained in $E_{\rho_1,\sigma_1}(x)\cap E_{\rho_2,\sigma_2}(x)$, then we have
 $$\begin{aligned} i\rho_1+j\sigma_1&=i_0\rho_1+j_0\sigma_1,\\  i\rho_2+j\sigma_2&=i_0\rho_2+j_0\sigma_2,\end{aligned}$$ so that $$\begin{aligned}(i-i_0)\rho_1+(j-j_0)\sigma_1&=0, \\(i-i_0)\rho_2+(j-j_0)\sigma_2&=0.\end{aligned}$$ The assumption $\rho/\sigma_1\neq \rho_2/\sigma_2$ implies that $i-i_0=j-j_0=0$, and hence $(i,j)=(i_0,j_0)$. This completes the proof.
 \end{proof}

Let
$x=\sum\a_{ij}p^iq^j\in A$,
  and assume that $$E_{\rho_1,\sigma_1}(x)\quad \mathrm{and}\quad  E_{\rho_2,\sigma_2}(x), \quad (\rho_1,\sigma_1), \ (\rho_2,\sigma_2)\in\mathcal P,$$  are two adjacent
 edges of $x$ joined  by a vertex $(i_0,j_0)$. By definition, the  $(\rho_1,\sigma_1)$-term of $x$ is of the form
 $$\cdots +\a_{i_0,j_0}p^{i_0}q^{j_0},$$ and the $(\rho_2,\sigma_2)$-term of $x$ is of the form $$\a_{i_0,j_0}p^{i_0}q^{j_0}+\cdots .$$

 \begin{lemma}(see \cite[Proposition 3.7(3)]{ggc})  There exists $(\rho, \sigma)\in\mathcal P$  such that the $(\rho,\sigma)$-term of $x$ is $\a_{i_0,j_0}p^{i_0}q^{j_0}.$
 \end{lemma}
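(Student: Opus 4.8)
The plan is to exhibit the required direction explicitly as the \emph{sum} of the two given directions. Concretely, I would set $\rho=\rho_1+\rho_2$ and $\sigma=\sigma_1+\sigma_2$; since $(\rho_1,\sigma_1),(\rho_2,\sigma_2)\in\mathcal P$ consist of positive integers, $\rho$ and $\sigma$ are again positive integers. For $k=1,2$ write $\ell_k(i,j)=i\rho_k+j\sigma_k$, so that $v_{\rho_k,\sigma_k}(x)=\max_{(i,j)\in E(x)}\ell_k(i,j)$ and the edge $E_{\rho_k,\sigma_k}(x)$ is precisely the set of maximizers of $\ell_k$ on $E(x)$. By hypothesis the vertex $(i_0,j_0)$ lies on both edges, so it maximizes both $\ell_1$ and $\ell_2$; in particular $\a_{i_0,j_0}\neq 0$, so $(i_0,j_0)\in E(x)$.

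The key step is then a one-line convexity estimate for the functional $\ell:=\ell_1+\ell_2$, whose value at $(i,j)$ is exactly $i\rho+j\sigma$. For every $(i,j)\in E(x)$ we have $\ell_1(i,j)\le\ell_1(i_0,j_0)$ and $\ell_2(i,j)\le\ell_2(i_0,j_0)$, whence
$$
i\rho+j\sigma=\ell_1(i,j)+\ell_2(i,j)\le \ell_1(i_0,j_0)+\ell_2(i_0,j_0)=i_0\rho+j_0\sigma,
$$
and equality forces both $\ell_1(i,j)=\ell_1(i_0,j_0)$ and $\ell_2(i,j)=\ell_2(i_0,j_0)$, i.e. $(i,j)\in E_{\rho_1,\sigma_1}(x)\cap E_{\rho_2,\sigma_2}(x)$. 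By the preceding lemma this intersection contains at most one pair of integers, and $(i_0,j_0)$ already belongs to it; hence $(i_0,j_0)$ is the \emph{unique} maximizer of $\ell$ on $E(x)$. Therefore $v_{\rho,\sigma}(x)=i_0\rho+j_0\sigma$ and $E_{\rho,\sigma}(x)=\{(i_0,j_0)\}$, so the $(\rho,\sigma)$-term of $x$ is the single monomial $\a_{i_0,j_0}p^{i_0}q^{j_0}$.

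It remains only to arrange $(\rho,\sigma)\in\mathcal P$. Let $d=\gcd(\rho,\sigma)$ and replace $(\rho,\sigma)$ by $(\rho/d,\sigma/d)$, a pair of relatively prime positive integers. As recorded in Section 2, scaling a direction by a positive constant leaves both $v_{\rho,\sigma}(x)$ and $E_{\rho,\sigma}(x)$ unchanged, so $E_{\rho/d,\sigma/d}(x)=E_{\rho,\sigma}(x)=\{(i_0,j_0)\}$ and the reduced pair still produces the desired monomial $(\rho,\sigma)$-term. I do not expect a genuine obstacle here: the whole argument is the standard fact that summing two supporting functionals of a convex set yields a functional supported exactly on the intersection of their faces, and the only substantive input is the preceding lemma, which guarantees that the two edges meet in a single lattice point. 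The one place to stay careful is to invoke that lemma (rather than the adjacency hypothesis directly) in order to rule out any \emph{other} common point of the two edges, and to note that the positivity of $\rho,\sigma$ is automatic, so the reduction to $\mathcal P$ by dividing out $d$ is harmless.
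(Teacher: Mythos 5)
Your proof is correct, but it runs on a different engine than the paper's. The paper picks an arbitrary $(\rho,\sigma)\in\mathcal P$ with $\rho/\sigma$ strictly between $\rho_1/\sigma_1$ and $\rho_2/\sigma_2$, uses Lemma 5.2 to see that at least one of the two supporting inequalities at a point $(i,j)\neq(i_0,j_0)$ is strict, and then invokes the interpolation Lemma 5.1 (with a WLOG case split) to transfer strictness to the intermediate slope. You instead take the explicit direction $(\rho_1+\rho_2,\sigma_1+\sigma_2)$ --- whose slope is the mediant of the two given slopes, hence automatically lies in the same open interval --- and simply add the two supporting inequalities; equality in the sum forces membership in both edges, which Lemma 5.2 (or indeed the adjacency hypothesis itself, since by the paper's definition adjacency already means the intersection \emph{is} the singleton vertex) collapses to $(i,j)=(i_0,j_0)$. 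Your route is more elementary: it avoids Lemma 5.1 and the case analysis entirely, and the $\gcd$-normalization at the end is legitimately handled by the scaling invariance recorded in Section 2. What the paper's argument buys, and yours as written does not, is the stronger conclusion stated in the Note following the lemma and relied on in the proofs of Lemma 5.4 and Theorem 5.6: \emph{every} $(\rho,\sigma)\in\mathcal P$ with $\rho/\sigma\in(\rho_1/\sigma_1,\rho_2/\sigma_2)$ has the vertex monomial as its $(\rho,\sigma)$-term, which is exactly what makes the limiting argument with $\rho_{(n)}/\sigma_{(n)}\to\rho_1/\sigma_1$ in Lemma 5.4 possible. Your method does extend to cover this --- any such $(\rho,\sigma)$ is a positive linear combination $\lambda(\rho_1,\sigma_1)+\mu(\rho_2,\sigma_2)$, and the weighted functional $\lambda\ell_1+\mu\ell_2$ admits the same equality analysis --- but you would need to say so explicitly if your proof were to replace the paper's and still support the later sections.
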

 \begin{proof} Without loss of generality we assume that $\rho_1/\sigma_1< \rho_2/\sigma_2$.  Let $(\rho, \sigma)\in \mathcal P$ be pair such that $\rho/\sigma\in (\rho_1/\sigma_1, \rho_2/\sigma_2)$.  We show that $(\rho,\sigma)$ is a desired pair.\par
 By definition we need to show that $$i\rho+j\sigma<i_0\rho+j_0\sigma\quad \text{for\ all}\quad  (i,j)\in E(x)\setminus(i_0,j_0).$$
  Let $(i,j)\in E(x)\setminus (i_0,j_0)$.
Then since $(i_0,j_0)$ joins $E_{\rho_1,\sigma_1}(x)$ and $E_{\rho_2,\sigma_2}(x)$, meaning that $i_0\rho_l+j_0\sigma_l=v_{\rho_l,\sigma_l}(x)$ for $l=1,2$, we  have by Lemma 5.2 $$\text{either} \quad i\rho_1+j\sigma_1<i_0\rho_1+j_0\sigma_1\quad  \text{or}\quad i\rho_2+j\sigma_2<i_0\rho_1+j_0\sigma_1.$$ Without loss of generality we assume that $$i\rho_1+j\sigma_1\leq i_0\rho_1+j_0\sigma_1,\qquad i\rho_2+j\sigma_2<i_0\rho_2+j_0\sigma_2$$ or, equivalently,
 $$i\rho_1/\sigma_1+j\leq i_0\rho_1/\sigma_1+j_0,\qquad i\rho_2/\sigma_2+j< i_0\rho_2/\sigma_2+j_0.$$  Then Lemma 5.1 says that $i\rho/\sigma+j<i_0\rho/\sigma+j_0$ and hence
$i\rho+j\sigma<i_0\rho+j_0\sigma$. Therefore, the $(\rho,\sigma)$-term of $x$ is $\a_{ij}p^{i_0}q^{j_0}$.\par
 \end{proof}
 Note:  The above proof shows that for any  pair $(\rho, \sigma)\in\mathcal P$ such that $$\rho/\sigma\in (\rho_1/\sigma_1, \rho_2/\sigma_2),$$
   the $(\rho,\sigma)$-term of $x$ is the same.\par

 \begin{lemma} Keep the assumptions before Lemma 5.3 on $x$.  In addition, assume  that $v_{\rho,\sigma}(x)\geq \rho+\sigma$ for all  $(\rho,\sigma)\in \mathcal P$. If $x$ is solvable with $y\in A$ such that $[x,y]=1$,   then $y$ also has adjacent edges $E_{\rho_1,\sigma_1}(y)$, $E_{\rho_2,\sigma_2}(y)$.
\end{lemma}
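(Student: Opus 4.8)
The plan is to convert the single relation $[x,y]=1$ into a family of homogeneous constraints, one for each direction $(\rho,\sigma)$, and then read off the edge structure of $y$ directly from that of $x$. First I would fix an arbitrary $(\rho,\sigma)\in\mathcal{P}$ and write $[x,y]=\t+u$ as in Lemma 3.2. Since $v_{\rho,\sigma}(x)\geq\rho+\sigma$ by hypothesis, Lemma 3.4 gives $\t=0$, so the equivalence $(1)\Leftrightarrow(2)$ of Lemma 3.2 yields
$$g^{v_{\rho,\sigma}(x)}=c\,f^{v_{\rho,\sigma}(y)}$$
for some nonzero scalar $c$, where $f,g$ are the $(\rho,\sigma)$-polynomials of $x,y$. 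Because $[x,y]=1\neq 0$ forces $y\in A-\mathbb{F}$, both exponents $v_{\rho,\sigma}(x),v_{\rho,\sigma}(y)$ are positive. This identity holds simultaneously for every $(\rho,\sigma)\in\mathcal{P}$, and it is the engine of the proof.

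The second step is the remark that a positive power of a polynomial is a monomial if and only if the polynomial itself is one. Applying this to the displayed identity (both exponents being positive), $g$ is a monomial exactly when $f$ is. In the language of Section 2 this reads
$$E_{\rho,\sigma}(x)\ \text{is an edge}\iff E_{\rho,\sigma}(y)\ \text{is an edge},$$
valid for every $(\rho,\sigma)\in\mathcal{P}$: the directions in which $x$ has edges are precisely those in which $y$ has edges. In particular, as $E_{\rho_1,\sigma_1}(x)$ and $E_{\rho_2,\sigma_2}(x)$ are edges, so are $E_{\rho_1,\sigma_1}(y)$ and $E_{\rho_2,\sigma_2}(y)$.

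It remains to show these two edges of $y$ are adjacent, i.e. that they meet. Assume without loss of generality $\rho_1/\sigma_1<\rho_2/\sigma_2$. By Lemma 5.3 and the Note following it, for every $(\rho,\sigma)\in\mathcal{P}$ with $\rho/\sigma\in(\rho_1/\sigma_1,\rho_2/\sigma_2)$ the $(\rho,\sigma)$-term of $x$ is the single monomial $\alpha_{i_0,j_0}p^{i_0}q^{j_0}$, so $E_{\rho,\sigma}(x)$ is a vertex; by the correspondence of the previous step $E_{\rho,\sigma}(y)$ is then a vertex as well. Hence $y$ has no edge in any intermediate direction. I would conclude by a convex-geometric argument on the Newton polygon of $y$: as $(\rho,\sigma)$ rotates across the open interval $(\rho_1/\sigma_1,\rho_2/\sigma_2)$, the maximizing set $E_{\rho,\sigma}(y)$ is piecewise constant and can jump only across edge-directions; since there are none in this range, one fixed vertex $(a,b)$ is selected throughout, and it is a common endpoint of both $E_{\rho_1,\sigma_1}(y)$ and $E_{\rho_2,\sigma_2}(y)$. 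By Lemma 5.2 their intersection contains at most one pair of integers, so this common vertex joins the two edges, and they are adjacent.

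The main obstacle is exactly this last passage, from \emph{no intermediate edge} to \emph{the two edges share a vertex}: Steps 1--2 are formal, but adjacency is a genuinely convex fact about the Newton polygon. I expect to make it precise in one of two ways. The clean route is the normal-fan description, where consecutive edges of a convex polygon necessarily share a vertex and ``no edge in between'' means consecutive. The hands-on route is to take Newton polytopes in $g^{v_{\rho,\sigma}(x)}=c\,f^{v_{\rho,\sigma}(y)}$ at the directions $(\rho_i,\sigma_i)$, which shows that $E_{\rho_i,\sigma_i}(y)$ is the segment $E_{\rho_i,\sigma_i}(x)$ scaled about the origin by $\lambda_i=v_{\rho_i,\sigma_i}(y)/v_{\rho_i,\sigma_i}(x)$, so the joining vertex $(i_0,j_0)$ of $x$ transports to $\lambda_1(i_0,j_0)$ and $\lambda_2(i_0,j_0)$; the crux is then verifying $\lambda_1=\lambda_2$, i.e. that the ratio $v_{\rho,\sigma}(y)/v_{\rho,\sigma}(x)$ is constant across these directions, which itself rests on the piecewise-constancy of the maximizing set. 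Keeping track of which endpoint of each edge is the shared one is the fiddly bookkeeping I would expect to absorb most of the work.
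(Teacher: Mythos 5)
Your Steps 1--2 are correct and complete: applying Lemmas 3.2 and 3.4 in every direction $(\rho,\sigma)\in\mathcal P$ to get $g^{v_{\rho,\sigma}(x)}=c\,f^{v_{\rho,\sigma}(y)}$, and observing that a positive power of a polynomial is a monomial exactly when the polynomial is, does show that $E_{\rho,\sigma}(y)$ is an edge precisely when $E_{\rho,\sigma}(x)$ is. This is in fact slightly more direct than the paper, which obtains the existence of the edges $E_{\rho_l,\sigma_l}(y)$ by first writing the $(\rho_l,\sigma_l)$-polynomials of $x$ as $f_l^{r_l}$ via Theorem 4.5 and then invoking Lemma 4.4 to get the form $c_lf_l^{R_l}$ for $y$ (a stronger conclusion that the paper needs later, in Lemmas 5.5 and 5.6, but that Lemma 5.4 itself does not require).

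The gap is exactly where you located it: adjacency. You reduce it to the assertion that, since $y$ has no edge in any intermediate direction, the maximizing set $E_{\rho,\sigma}(y)$ is one fixed vertex throughout the open interval of slopes, and that this vertex is a common endpoint of $E_{\rho_1,\sigma_1}(y)$ and $E_{\rho_2,\sigma_2}(y)$. That convex-geometric claim is true, but it is precisely what has to be proved, and neither of your two routes is carried out --- you yourself leave the crux ($\lambda_1=\lambda_2$ in your scaling picture) unresolved. The paper closes this gap with a device already contained in your Step 1: apply the identity at an intermediate direction $(\rho,\sigma)$, where the $(\rho,\sigma)$-polynomial of $x$ is the single monomial $\alpha_{i_0,j_0}X^{i_0}Y^{j_0}$. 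Then $\tau^{v_{\rho,\sigma}(x)}=c\,(X^{i_0}Y^{j_0})^{v_{\rho,\sigma}(y)}$ forces the $(\rho,\sigma)$-polynomial $\tau$ of $y$ to be a monomial $\beta_{st}X^sY^t$ with $(s,t)$ on the ray through $(i_0,j_0)$, say $(s,t)=K(i_0,j_0)$; and since $(s,t)$ maximizes $i\rho+j\sigma$ over $E(y)$, necessarily $K=\max\{k>0\mid (ki_0,kj_0)\in E(y)\}$, which is visibly independent of $(\rho,\sigma)$. This yields the constancy of the intermediate vertex with no appeal to Newton-polygon fan theory. The paper then finishes by a limit argument: choosing rational slopes $\rho_{(n)}/\sigma_{(n)}$ in the interval tending to $\rho_l/\sigma_l$ and passing to the limit in the strict inequalities $i\rho_{(n)}+j\sigma_{(n)}<s\rho_{(n)}+t\sigma_{(n)}$ shows $(s,t)\in E_{\rho_l,\sigma_l}(y)$ for $l=1,2$, whence Lemma 5.2 gives $E_{\rho_1,\sigma_1}(y)\cap E_{\rho_2,\sigma_2}(y)=\{(s,t)\}$. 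Your proposal has the right skeleton and the correct first half, but without this mechanism (or an actual proof of the convexity facts you invoke, including the point that a direction at which the maximizer jumps is automatically rational, being orthogonal to a lattice vector) it is not a complete proof.
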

\begin{proof} By our assumptions,   the $(\rho_1,\sigma_1)$-polynomial and the $(\rho_2,\sigma_2)$-polynomial of $x$  are respectively  $$\cdots +\a_{i_0,j_0}X^{i_0}Y^{j_0}$$ and $$\a_{i_0,j_0}X^{i_0}Y^{j_0}+\cdots .$$ \par Since $x$ is solvable, by Theorem 4.5 we may write these two
polynomials respectively as $f_1^{r_1}$ ($r_1>1$) and $f_2^{r_2}$ ($r_2>1$) for some $f_1, f_2\in \mathbb F[X,Y]$. We may further assume  that, for $l=1,2$,  $f_l$  is no longer equal to $\kappa_l^k$ ($k>1$) for any $\kappa_l\in \mathbb F[X,Y]$.
 Since $E_{\rho_1,\sigma_1}(x)$ and $E_{\rho_2,\sigma_2}(x)$ are edges of $x$, neither $f_1$ nor $f_2$ is a monomial. \par For each pair $(\rho_l, \sigma_l)$, $l=1,2$, write $[x,y]=\t+u$ as in Lemma 3.2. Then  we have $\t=0$ by Lemma 3.4. Hence, by  Lemma 3.2 and Lemma 4.4 the $(\rho_l,\sigma_l)$-polynomial of $y$ is of the form $$c_lf_l^{R_l}, \quad  R_l\in \mathbb N\setminus 0,\ c_l\in \mathbb F\setminus 0.$$ Thus, $y$ has the edges  $E_{\rho_1,\sigma_1}(y)$ and $E_{\rho_2,\sigma_2}(y)$, which we are to show are adjacent.\par Let $(\rho, \sigma)\in\mathcal P$ be a pair of integers such that  $\rho/\sigma\in (\rho_1/\sigma_1, \rho_2/\sigma_2)$. By the proof of Lemma 5.3, the $(\rho,\sigma)$-polynomial of $x$ is $\a_{i_0,j_0}X^{i_0}Y^{j_0}$.   Let $y=\sum \beta_{ij}p^iq^j$ and let $\tau$ be its $(\rho,\sigma)$-polynomial. Write $[x,y]=\t+u$ as in Lemma 3.2. Then we have $\t=0$ by Lemma 3.4.
  Hence Lemma 3.2 says that $$\tau^{v_{\rho,\sigma}(x)}=c(\a_{i_0,j_0}X^{i_0}Y^{j_0})^{v_{\rho,\sigma}(y)} \quad \text{for some}\ c\in\mathbb F\setminus 0.$$  Therefore $\tau$  is  a monomial, which we denote  by $\beta_{st} X^sY^t$.  Then  $$(X^sY^t)^{v_{\rho,\sigma}(x)}=(X^{i_0}Y^{j_0})^{v_{\rho,\sigma}(y)},$$ implying that $s/t=i_0/j_0$, so that $(s, t)=(Ki_0,Kj_0)$ for some $K>0$. Since $\tau$ is the $(\rho,\sigma)$-polynomial of $y$, we have $$v_{\rho,\sigma}(y)=s\rho+t\sigma=Ki_0\rho+Kj_0\sigma,$$ which    implies  that
  $$K=\text{max}\{k>0|(ki_0,kj_0)\in E(y)\},$$ so that $(s,t)$ is independent of the choice of $(\rho, \sigma)$ as above.    In other words, the element $\beta_{st}p^sq^t$
is the $(\rho,\sigma)$-term of $y$  for any  $(\rho,\sigma)\in \mathcal P$ such that $\rho/\sigma\in (\rho_1/\sigma_1, \rho_2/\sigma_2)$.\par
 We now show that the vertex $(s,t)$ joins $E_{\rho_1,\sigma_1}(y)$ and $E_{\rho_2,\sigma_2}(y)$.\par Choose a sequence of pairs  $(\rho_{(n)},\sigma_{(n)})\in\mathcal P,\ n=1,2,\dots$, such that  $$\rho_{(n)}/\sigma_{(n)}\in (\rho_1/\sigma_1, \rho_2/\sigma_2)\quad \text{and}\quad \underset{n\rightarrow \infty}{\text{lim}}\ \rho_{(n)}/\sigma_{(n)}=\rho_1/\sigma_1.$$ For each $n$, since $\beta_{st}p^sq^t$ is the $(\rho_{(n)}, \sigma_{(n)})$-term of $y$, we have $$i\rho_{(n)}+j\sigma_{(n)}<s\rho_{(n)}+t\sigma_{(n)}\quad \text{for all}\quad  (i,j)\in E(y)\setminus (s,t)$$ and hence $$i\rho_{(n)}/\sigma_{(n)}+j<s\rho_{(n)}/\sigma_{(n)}+t\quad \text{for all}\quad  (i,j)\in E(y)\setminus (s,t).$$ Taking the limit as $n\rightarrow \infty$, we get $$i\rho_1/\sigma_1+j\leq s \rho_1/\sigma_1+t \quad \text{for all}\quad  (i,j)\in E(y)\setminus (s,t),$$ so that $$i\rho_1+j\sigma_1\leq s\rho_1+t\sigma_1 \quad \text{for all}\quad  (i,j)\in E(y)\setminus (s,t),$$ implying that $(s,t)\in E_{\rho_1,\sigma_1}(y)$.\par Similarly we obtain $(s,t)\in E_{\rho_2,\sigma_2}(y)$.
  Then Lemma 5.2 says that   $$E_{\rho_1,\sigma_1}(y)\cap E_{\rho_2,\sigma_2}(y)=\{(s,t)\},$$ so that the edges
  $E_{\rho_1,\sigma_1}(y)$ and  $E_{\rho_2,\sigma_2}(y)$ are adjacent.
\end{proof}

  Let $x\in A$ be an element having edges $$E_{\rho_1,\sigma_1}(y), \dots, E_{\rho_k,\sigma_k}(y), \quad (\rho_1,\sigma_1), \dots, (\rho_k, \sigma_k)\in \mathcal P, \ k\geq 2.$$   Then $\rho_1/\sigma_1,\dots, \rho_k/\sigma_k$ are $k$ distinct rational numbers. It's no loss of generality to assume that $$\rho_1/\sigma_1<\rho_2/\sigma_2<\cdots <\rho_k/\sigma_k.$$ For each $l$, $1\leq l<k$,
take $(\rho,\sigma)\in\mathcal P$ such that $\rho/\sigma\in (\rho_l/\sigma_l,\  \rho_{l+1}/\sigma_{l+1})$.\   Then $E_{\rho,\sigma}(x)$ is no longer an edge, hence it must be a vertex which, by a similar argument as in the   proof of Lemma 5.4,  joins $E_{\rho_l,\sigma_l}(x)$ and $E_{\rho_{l+1},\sigma_{l+1}}(x)$. Therefore,
      the edges $E_{\rho_l,\sigma_l}(x)$ and $E_{\rho_{l+1},\sigma_{l+1}}(x)$ are adjacent.\par We claim that $E_{\rho_l,\sigma_l}(x)$ and $E_{\rho_{l+j},\sigma_{l+j}}(x)$ are not adjacent if $j>1$. Suppose on the contrary that they are adjacent. Then by the proof of Lemma 5.3, $E_{\rho,\sigma}(x)$ is a vertex for any $(\rho,\sigma)\in\mathcal P$ such that $\rho/\sigma\in (\rho_l/\sigma_l, \rho_{l+j}/\sigma_{l+j})$,  contrary to the fact that $E_{\rho_{l+1},\sigma_{l+1}}(x)$ is an edge.   Therefore, two distinct edges $$E_{\rho_i,\sigma_i}(x),\ E_{\rho_j,\sigma_j}(x),\quad 1\leq i<j\leq k,$$  are adjacent if and only if $i=j-1$.\par
We are now ready to study solvable elements in $A$ using these conclusions. \par In the following, assume that $x\in A$
is  a solvable element having edges $$E_{\rho_1,\sigma_1}(x), \dots, E_{\rho_k,\sigma_k}(x),\quad (\rho_1,\sigma_1),\dots ,(\rho_k,\sigma_k)\in\mathcal P, \ k\geq 2,$$ where $\rho_1/\sigma_1<\cdots <\rho_k/\sigma_k$.
We also assume that $$v_{\rho,\sigma}(x)\geq \rho+\sigma\quad \text{for all}\quad (\rho,\sigma)\in\mathcal P.$$
Then by Theorem 4.5, for each $l$,  $ 1\leq l\leq k$, the $(\rho_l,\sigma_l)$-polynomial of $x$ is of the form $f_l^{r_l}$ ($r_l>1$) for some $f_l\in \mathbb F[X,Y]$. We assume that each $f_l$ is no longer equal to $\kappa^n$ ($n>1$) for any polynomial $\kappa\in\mathbb F[X,Y]$.
\begin{lemma} With these assumptions on $x$, we have   $$(r_l, r_{l+1})>1\quad \text{for all}\quad 1\leq l<k.$$
\end{lemma}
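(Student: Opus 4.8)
The plan is to pit the solver $y$ (with $[x,y]=1$) against the powers of $x$ and to read off a divisibility relation from the common joining vertices. First I would pin down the global shape of $y$. Because $v_{\rho,\sigma}(x)\ge\rho+\sigma$ for every $(\rho,\sigma)\in\mathcal P$, Lemma 3.4 forces $\t=0$ for every such pair, so Lemma 3.2 together with Lemma 4.4 shows that the $(\rho,\sigma)$-polynomial of $y$ is always a scalar multiple of a power of the $(\rho,\sigma)$-polynomial of $x$; where the latter is a monomial so is the former. Hence $y$ has no edges in directions where $x$ has none, and its edges are precisely $E_{\rho_1,\sigma_1}(y),\dots,E_{\rho_k,\sigma_k}(y)$, with $(\rho_l,\sigma_l)$-polynomial $c_lf_l^{R_l}$ ($c_l\in\mathbb F\setminus 0$, $R_l\in\mathbb N\setminus 0$) and with joining vertices the scaled ones produced in Lemma 5.4.

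Next I would extract the divisibility. Let $V_l=(i_l,j_l)$ be the vertex joining $E_{\rho_l,\sigma_l}(x)$ and $E_{\rho_{l+1},\sigma_{l+1}}(x)$, let $M_l$ be the exponent of the highest ($p$-degree) monomial of $f_l$ and $m_{l+1}$ that of the lowest monomial of $f_{l+1}$. Since the extreme monomial of a power is the power of the extreme monomial, $V_l=r_lM_l=r_{l+1}m_{l+1}$ for $x$, while the corresponding vertex of $y$ equals $R_lM_l=R_{l+1}m_{l+1}$; dividing gives $R_l/r_l=R_{l+1}/r_{l+1}$. Propagating this equality along the chain of edges (each interior edge yields it at both of its endpoints) shows that the ratio is one single number $K=R_l/r_l$ for all $l$. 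Writing $K=a/b$ in lowest terms, $R_l=(a/b)r_l\in\mathbb N$ forces $b\mid r_l$ for every $l$, hence $b\mid(r_l,r_{l+1})$. Thus the whole lemma reduces to showing $b>1$, i.e. that $K$ is not an integer.

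The crux, and the step I expect to be the main obstacle, is to exclude an integral $K$. Here I would invoke minimality: among all $y$ with $[x,y]=1$ choose one with least $(1,1)$-degree $v_{1,1}(y)$, and suppose $K=a\in\mathbb N\setminus 0$. Then $R_l=ar_l$, so on each edge the polynomial of $y$ is proportional to that of $cx^a$. The coefficient of $y$ at the joining vertex is at once the top coefficient of $c_lf_l^{R_l}$ and the bottom coefficient of $c_{l+1}f_{l+1}^{R_{l+1}}$; combining this with the matching identity for $x$, namely $\a_{i_l,j_l}=\lambda_l^{r_l}=\mu_{l+1}^{r_{l+1}}$ where $\lambda_l,\mu_{l+1}$ are the extreme coefficients of $f_l,f_{l+1}$, and using $R_l=ar_l$, one obtains $c_l\a_{i_l,j_l}^{a}=c_{l+1}\a_{i_l,j_l}^{a}$, whence $c_l=c_{l+1}$. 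So all the $c_l$ coincide with a common $c$, and then $y$ and $cx^a$ have identical $(\rho,\sigma)$-polynomials along the entire upper boundary of the Newton polygon (every edge and every joining vertex). Consequently $y-cx^a$ is a solver, since $[x,y-cx^a]=[x,y]=1$, with $v_{1,1}(y-cx^a)<v_{1,1}(y)$, contradicting minimality.

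Therefore $K$ is not an integer, so $b>1$ and $(r_l,r_{l+1})\ge b>1$ for every $1\le l<k$. The points that will need the most care are the first paragraph's claim that $y$ carries no spurious extra edges (without it the global comparison of $y$ with $cx^a$ is not licensed) and the verification that the several ratios $R_l/r_l$ are genuinely one and the same $K$; the decisive ingredient, however, is the minimality argument that rules out an integral $K$.
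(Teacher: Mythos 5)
Your proposal is correct, and its skeleton matches the paper's: both arguments extract the constant ratio $K=R_l/r_l$ from the joining vertices, observe that coprimality of consecutive $r_l$'s would force $K$ to be a positive integer $a$, and then kill the leading part of $y$ by subtracting a scalar multiple of $x^a$. Where you genuinely diverge is in the descent. The paper, working only with $r_1,r_2$ under the contradiction hypothesis $(r_1,r_2)=1$, subtracts $c_1x^m$ and tracks the drop of $v_{\rho_1,\sigma_1}$ alone; the cancellation $c_1f_1^{R_1}-c_1(f_1^{r_1})^m=0$ is automatic there, so no relation among the $c_l$'s is ever needed, but the whole discussion must then be iterated on $y'=y-c_1x^m$ until one lands in $\mathbb F[x]$ and gets $[x,y]=0$. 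You instead take a solver $y$ of minimal $(1,1)$-degree and perform a single subtraction $y-cx^a$; the price is that, since one cannot know in advance where the direction $(1,1)$ meets the Newton boundary, you must match $y$ against $cx^a$ along the entire boundary, which is exactly why you need the extra (correct) step that all the $c_l$ coincide. Two details in your write-up need patching, though neither is fatal: (i) at vertex directions Lemma 4.4 does not literally apply, since a monomial can itself be a proper power; that the $(\rho,\sigma)$-polynomial of $y$ is then a monomial with proportional exponent follows instead directly from the relation $g^{v_{\rho,\sigma}(x)}=cf^{v_{\rho,\sigma}(y)}$ of Lemma 3.2, exactly as in the paper's proof of Lemma 5.4; (ii) your matching list ``every edge and every joining vertex'' omits the two extreme vertices lying beyond the outermost directions $\rho_1/\sigma_1$ and $\rho_k/\sigma_k$, and the direction $(1,1)$ may well sit in that range; the same endpoint-of-a-power argument (the extreme term of $c_1f_1^{R_1}$ is $c\alpha^{a}_{i,j}X^{ai}Y^{aj}$ when the extreme vertex of $x$ is $(i,j)$) covers them. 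In compensation, your route proves more in one stroke: since the denominator $b$ of $K$ divides every $r_l$, you get $(r_1,\dots,r_k)\geq b>1$, i.e.\ Theorem 5.6 as well, whereas the paper has to rerun its argument separately for that theorem.
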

\begin{proof} It suffices to show that $(r_1, r_2)>1$. The other inequalities can be proved similarly.\par
We assume that $(r_1, r_2)=1$ and derive a contradiction.\par
 Let $x=\sum \a_{ij} p^iq^j$, and let $(i_0, j_0)$ be the vertex of $x$ joining $E_{\rho_1,\sigma_1}(x)$ and $E_{\rho_2,\sigma_2}(x)$. Then we have $$f_1^{r_1}=\cdots +\a_{i_0,j_0}X^{i_0}Y^{j_0}$$ and $$f_2^{r_2}=\a_{i_0,j_0}X^{i_0}Y^{j_0}+\cdots .$$
 Let $y\in A$ be an element such that $[x, y]=1$. By Lemma 5.4, $y$ also has adjacent edges $E_{\rho_l,\sigma_l}(y)$, $l=1,2$. In particular,
 the proof of Lemma 5.4 shows that, for $l=1,2$,  the $(\rho_l,\sigma_l)$-polynomial  of $y$ is also of the form $c_lf_l^{R_l}$ for some  $R_l\in \mathbb N\setminus 0$ and $c_l\in\mathbb F\setminus 0$.\par
 Let $(s,t)$ be the vertex
joining $E_{\rho_1,\sigma_1}(y)$ and $E_{\rho_2,\sigma_2}(y)$.
  Then  we have $$c_1f_1^{R_1}=\cdots +\beta_{st}X^sY^t$$ and $$c_2f_2^{R_2}=\beta_{st}X^sY^t+\cdots .$$ From the proof of Lemma 5.4 we also obtain $i_0/j_0=s/t$. \par For brevity, set $$\theta=:s/i_0\ (=t/j_0),\quad v_1=:v_{\rho_1,\sigma_1}(f_1),\quad v_2=:v_{\rho_2,\sigma_2}(f_2).$$ Since both $x$ and $y$ are solvable, so that $x,y\in A-\mathbb F$, \  $v_1$ and $v_2$ are both nonzero.\par
 From above we have $$\begin{aligned}v_{\rho_1,\sigma_1}(x)&=i_0\rho_1+j_0\sigma_1=r_1v_1,\quad &&v_{\rho_2,\sigma_2}(x)=i_0\rho_2+j_0\sigma_2=r_2v_2,\\
v_{\rho_1,\sigma_1}(y)&=s\rho_1+t\sigma_1=R_1v_1,\quad &&v_{\rho_2,\sigma_2}(y)=s\rho_2+t\sigma_2=R_2v_2,\end{aligned}$$ implying that $$R_1v_1=\theta r_1v_1, \quad
R_2v_2=\theta r_2v_2$$ and hence $$R_1=\theta r_1,\quad R_2=\theta r_2.$$ Write the rational number $\theta$ as $m/n$ such that $(m,n)\in\mathcal P$.  Then we get $$nR_1=mr_1,\quad  nR_2=mr_2$$  and hence \ $n|r_1, \ n|r_2$, \ since $(n,m)=1$.  It follows that  $n=1$, which yields $$R_1=mr_1,\quad R_2=mr_2.$$
Set $y'=y-c_1x^m$. Then since $c_1f_1^{R_1}-c_1(f_1^{r_1})^m=0$,  the $(\rho_1,\sigma_1)$-degree of the $(\rho_1,\sigma_1)$-polynomial of $y'$ is strictly less than that of $y$; that is, $v_{\rho_1,\sigma_1}(y')<v_{\rho_1,\sigma_1}(y)$.
  Since $[x,y']=1$,  the above discussion applies to $y'$ in place of $y$ unless $y'$ is a scalar. Continue this process. Then we will obtain that $y$ is a polynomial of $x$, so that $[x,y]=0$, a contradiction. \par Therefore, we must have $(r_1, r_2)>1$, as desired.
\end{proof}

\begin{theorem} Keep the  assumptions on $x$ before Lemma 5.5. Then $$(r_1,r_2,\dots, r_k)>1.$$
\end{theorem}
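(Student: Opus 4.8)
The plan is to follow the scheme of Lemma 5.5 but to exploit a global rigidity that Lemma 5.5 does not use: because consecutive joints of the Newton polygon of $x$ share a common edge, the local ratios appearing at the different joints are forced to agree. I would argue by contradiction, assuming $(r_1,\dots,r_k)=1$.

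Recall from Lemma 5.4 (and the discussion preceding Lemma 5.5) that $y$ has edges in exactly the same directions $(\rho_l,\sigma_l)$ as $x$, that its $(\rho_l,\sigma_l)$-polynomial is $c_lf_l^{R_l}$ for some $c_l\in\mathbb F\setminus 0$ and $R_l\in\mathbb N\setminus 0$, and that for each $l<k$ the vertex $(s^{(l)},t^{(l)})$ of $y$ joining $E_{\rho_l,\sigma_l}(y)$ and $E_{\rho_{l+1},\sigma_{l+1}}(y)$ is a positive rational multiple $\theta_l$ of the vertex $(i_0^{(l)},j_0^{(l)})$ of $x$ joining $E_{\rho_l,\sigma_l}(x)$ and $E_{\rho_{l+1},\sigma_{l+1}}(x)$. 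The first key step is to show that all the $\theta_l$ coincide. Since $(i_0^{(l)},j_0^{(l)})$ lies on both $E_{\rho_l,\sigma_l}(x)$ and $E_{\rho_{l+1},\sigma_{l+1}}(x)$, evaluating the two valuations at the scaled vertex $(s^{(l)},t^{(l)})=\theta_l(i_0^{(l)},j_0^{(l)})$ gives, exactly as in Lemma 5.5, $R_l=\theta_l r_l$ and $R_{l+1}=\theta_l r_{l+1}$. But $R_l=v_{\rho_l,\sigma_l}(y)/v_{\rho_l,\sigma_l}(f_l)$ depends only on the $l$-th edge of $y$, not on the joint used to compute it; comparing the value $R_l=\theta_{l-1}r_l$ coming from joint $l-1$ with the value $R_l=\theta_l r_l$ coming from joint $l$, and cancelling $r_l\neq 0$, yields $\theta_{l-1}=\theta_l$. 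Hence all the $\theta_l$ equal a single $\theta=m/n$ with $(m,n)\in\mathcal P$, and $R_l=\theta r_l$ for every $l$.

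Now $nR_l=mr_l$ with $(m,n)=1$ forces $n\mid r_l$ for all $l$, so $n\mid(r_1,\dots,r_k)=1$; that is, $\theta=m\in\mathbb N$. At this point I would run the reduction of Lemma 5.5 simultaneously on all edges. With $\theta=m$ the $(\rho_l,\sigma_l)$-polynomial of $y$ is $c_l(f_l^{r_l})^m$ while that of $x^m$ is $(f_l^{r_l})^m$; matching the coefficient of $y$ at the shared joint vertex $(s^{(l)},t^{(l)})=m(i_0^{(l)},j_0^{(l)})$ computed from edge $l$, namely $c_l(\alpha^{(l)})^m$, against the one computed from edge $l+1$, namely $c_{l+1}(\alpha^{(l)})^m$, where $\alpha^{(l)}\neq 0$ is the coefficient of $x$ there, gives $c_l=c_{l+1}$. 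Thus all $c_l$ equal a common $c$, and $y-cx^m$ has strictly smaller $(\rho_l,\sigma_l)$-degree on every edge while still satisfying $[x,y-cx^m]=1$. Since the hypothesis $(r_1,\dots,r_k)=1$ is a property of $x$ alone, the entire analysis reapplies verbatim to $y-cx^m$, so I may iterate; each step lowers $\max_l v_{\rho_l,\sigma_l}$ by a positive integer, so after finitely many steps the remainder is a scalar and $y\in\mathbb F[x]$. This gives $[x,y]=0\neq 1$, the desired contradiction, whence $(r_1,\dots,r_k)>1$.

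The heart of the argument --- and the step I expect to require the most care --- is the rigidity in the second paragraph: that $R_l/r_l$ is intrinsic to the $l$-th edge and therefore the joint ratios propagate to one global $\theta$. This is exactly what upgrades the pairwise conclusion of Lemma 5.5 to a statement about the overall gcd; note that pairwise non-coprimality alone is insufficient (for instance $r_1,r_2,r_3=2,6,3$ are pairwise non-coprime yet have gcd $1$), so the shared-edge constraint is essential. The remaining points --- the coefficient matching that forces all $c_l$ equal and the termination of the reduction --- are then routine.
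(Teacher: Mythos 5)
Your proof is correct and takes essentially the same route as the paper: the paper likewise propagates the joint ratios into a single $\theta=m/n$ (it phrases your rigidity step in vertex coordinates, showing $s_1/i_1=t_1/j_1=s_2/i_2=\cdots=t_{k-1}/j_{k-1}$ via the shared-edge identities (1) and (2), whereas you phrase it through the intrinsic exponents $R_l$), then uses $(m,n)=1$ together with $(r_1,\dots,r_k)=1$ to force $n=1$, and finishes by the same iterated subtraction $y\mapsto y-c_1x^m$ as in Lemma 5.5. Your extra step matching all the coefficients $c_l$ to a common $c$ is valid but not needed, since tracking the degree drop along a single edge already terminates the iteration.
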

\begin{proof} The case $k=2$ is immediate from Lemma 5.5. Thus we assume that $k>2$.\par
We assume that $(r_1, r_2,\dots, r_k)=1$ and derive a contradiction.\par
Let $y\in A$ be an element such that $[x,y]=1$. Then Lemma 5.4 says that
$y$ also has $k$ edges $$E_{\rho_1,\sigma_1}(y),\dots,  E_{\rho_k,\sigma_k}(y)$$ such that, for each $l$, $1\leq l<k$, the edges $E_{\rho_l,\sigma_l}(y)$ and $E_{\rho_{l+1},\sigma_{l+1}}(y)$ are joined by a vertex which we denote now by
$(s_l,t_l)$.  In addition,  from the proof of Lemma 5.5 we have that, for $1\leq l\leq k$,  the $(\rho_l,\sigma_l)$-polynomial of $y$ equals \ $c_lf_l^{R_l}$\ for some $R_l\in\mathbb N\setminus 0$ and $c_l\in\mathbb F\setminus 0$. \par  Set  $v_l=:v_{\rho_l,\sigma_l}(f_l)$ for  $1\leq l\leq k$. In view of the proof of Lemma 5.5, we have $v_l\neq 0$ for all $l$.\par Since for all $l$ with  $1\leq l <k$,  $(i_l, j_l)$ joins the edges $E_{\rho_l,\sigma_l}(x)$, $E_{\rho_{l+1},\sigma_{l+1}}(x)$,  and  $(s_l, t_l)$ joins the edges $E_{\rho_l,\sigma_l}(y)$, $E_{\rho_{l+1},\sigma_{l+1}}(y)$,    we have
$$ (1)\qquad \begin{aligned}v_{\rho_1,\sigma_1}(x)&= i_1\rho_1+j_1\sigma_1=r_1v_1, \\
v_{\rho_2,\sigma_2}(x)&=i_1\rho_2+j_1\sigma_2=r_2v_2,\\
v_{\rho_2,\sigma_2}(x)&=i_2\rho_2+j_2\sigma_2=r_2v_2,\\
    &                          \dots,\\
v_{\rho_{k-1},\sigma_{k-1}}(x)&= i_{k-1}\rho_{k-1}+j_{k-1}\sigma_{k-1}=r_{k-1}v_{k-1},\\ v_{\rho_k,\sigma_k}(x)&=i_{k-1}\rho_k+j_{k-1}\sigma_k=r_kv_k,\end{aligned}$$
 and
$$(2)\qquad \begin{aligned} v_{\rho_1,\sigma_1}(y)&=s_1\rho_1+t_1\sigma_1=R_1v_1, \\
v_{\rho_2,\sigma_2}(y)&=s_1\rho_2+t_1\sigma_2=R_2v_1,\\
v_{\rho_2,\sigma_2}(y)&= s_2\rho_2+t_2\sigma_2=R_2v_2,\\
&        \dots,\\
  v_{\rho_{k-1},\sigma_{k-1}}(y)&=s_{k-1}\rho_{k-1}+t_{k-1}\sigma_{k-1}=R_{k-1}v_{k-1},\\ v_{\rho_k,\sigma_k}(y)&=s_{k-1}\rho_k+t_{k-1}\sigma_k=R_kv_k.\end{aligned}$$

From the proof of Lemma 5.5 we have \ $s_l/i_l=t_l/j_l $ \ for $1\leq l< k$.
  Then by the identities $i_1\rho_2+ j_1\sigma_2=i_2\rho_2+j_2\sigma_2$ obtained from (1) and $ s_1\rho_2+ t_1\sigma_2=s_2\rho_2+t_2\sigma_2$ obtained from  (2),
 we have  $$s_1/i_1=t_1/j_1=s_2/i_2=t_2/j_2.$$ Inductively  we obtain $$
s_1/i_1=t_1/j_1=s_2/i_2=t_2/j_2=\cdots =s_{k-1}/i_{k-1}=t_{k-1}/j_{k-1}.$$
 Denote this rational number by $m/n$ such that $(m,n)\in \mathcal P$. By comparing the identities (1) with the identities (2)
we obtain $$R_l/r_l=m/n \quad \mathrm{for}\quad 1\leq l\leq k.$$ Then the assumption $(r_1,\dots,r_k)=1$ leads to a contradiction by a similar argument as that used in the  proof of Lemma 5.5. Therefore, we must have $(r_1,\dots,r_k)>1,$ as desired.
\end{proof}

\def\refname{\centerline{\bf REFERENCES}}

\end{document}